\documentclass{article}%
\usepackage{amsmath}
\usepackage{amsfonts}
\usepackage{amssymb}
\usepackage{graphicx}%
\usepackage[pagewise]{lineno}

\usepackage{cite}

\setcounter{MaxMatrixCols}{30}
\newtheorem{theorem}{Theorem}[section]

\newtheorem{corollary}[theorem]{Corollary}

\newtheorem{definition}[theorem]{Definition}

\newtheorem{lemma}[theorem]{Lemma}

\newtheorem{proposition}[theorem]{Proposition}
\newtheorem{remark}[theorem]{Remark}

\newenvironment{proof}[1][Proof]{\noindent\textbf{#1.} }{\ \rule{0.5em}{0.5em}}
\numberwithin{equation}{section}

\begin{document}

\title{Generalized Hannay-Berry Connections on Foliated Manifolds and Applications}

\author{Misael Avenda\~no-Camacho, Isaac Hasse-Armengol \&
Yury Vorobev \\
Department of Mathematics, University of Sonora \\
Blvd. Luis Encinas y Rosales, \\
Col. Centro, C.P. 83000\\
Hermosillo, Sonora, M\'exico}

\maketitle

\begin{abstract}

\end{abstract}

\section{Introduction}

In this paper we discuss some aspects of the averaging method for Poisson
connections on foliated manifolds with symmetry generalizing the previous
results on the Hannay-Berry connections on fibrations due to \cite{Mn-88,MaMoRa-90} which play an important role in the normal form theory for
Hamiltonian systems of adiabatic type (see, for example, \cite{MYu-17}). One
of our main motivations is related to the further development and reviewing
of the averaging procedure for Dirac structures with singular presymplectic
foliations \cite{VaVo-14}.

Our starting point is a Poisson foliation $(M,\mathcal{F},P)$ consisting of a
regular foliation $\mathcal{F}$ on a manifold $M$ and a vertical Poisson
tensor $P$ on $M$ characterizing by the condition: each symplectic leaf of $P$
belongs to a leaf of $\mathcal{F}$. We are interested in the set
$\operatorname{Conn}_{H}(M,\mathcal{F},P)$ of Ehresmann-Poisson connections
$\gamma$ on $(M,\mathcal{F},P)$ satisfying the following condition: the
curvature of $\gamma$ is Hamiltonian, that is, the curvature form takes values
in Hamiltonian vector fields of $P$. Thus, we can assign to each connection  $\gamma\in \operatorname{Conn}_H(M,\mathcal{F},P) $ a horizontal $2$-form called the Hamiltonian form of the curvature. This form is uniquely determined modulo Casimir-valued horizontal 2-form. A Poisson connection $\gamma\in \operatorname{Conn}_H(M,\mathcal{F},P) $ is said to be admissible if  one can choose its Hamiltonian 2-form of the curvature to be " horizontally closed". Such class of Poisson connections with
Hamiltonian curvature naturally arises in the context of the coupling method
for Poisson and Dirac structures on fibered and foliated manifolds
\cite{Vo-01}, \cite{Va-06},. In particular, it is well known that each  coupling Dirac structure induces an admissible Poisson connection, \cite{Cu-90,Va-06,Wa-08}. Conversely, the
coupling procedure actually gives the conditions under which the vertical
Poisson tensor $P$ can be extended to a special Dirac structure via a given
connection $\gamma\in\operatorname{Conn}_{H}(M,\mathcal{F},P)$. In general, the set
$\operatorname{Conn}_{H}(M,\mathcal{F},P)$ can be empty. In the case of
 fibrations, the question on the existence of Poisson connections with
Hamiltonian curvature was discussed in \cite{BrF-08}. Our purpose is to study
the set $\operatorname{Conn}_{H}(M,\mathcal{F},P)$ (also the subset of admissible connections) under the symmetry
hypothesis that there exists an action $\Phi$ on $M$ of a compact and
connected Lie group $G$ which preserves the foliation $\mathcal{F}$. In this
situation, one can average the connections on the foliated manifold and the
natural question is to characterize the $G$-actions for which the averaging
procedure preserves the set $\operatorname{Conn}_{H}(M,\mathcal{F},P)$ and the subset of admissible Poisson connections.
Such result for canonical actions with momentum map on general Poisson fiber
bundles was originally stated in \cite{MaMoRa-90} and then extended to locally
Hamiltonian actions on Poisson foliations in \cite{VaVo-14}. In the present paper,
we observe that the averaging procedure preserves the Poisson connections with
Hamiltonian curvature and admissible connections for a more wide class of $G$-actions which admit a
 pre-momentum map in sense of \cite{Ginz-1}. Our main application is that,
starting with an admissible connection $\gamma\in\operatorname{Conn}_{H}(M,\mathcal{F}%
,P)$, we show how to construct a family of $G$-invariant Dirac structures on
$(M,\mathcal{F},P)$ parametrized by 2-cocycles of the Casimir-de Rham complex
associated to the Poisson foliation \cite{AEYu-17}. 

The paper is organized as follows. In Section 2, we recall some basic facts about Poisson connections on regular Poisson foliated manifolds. In Section 3, we  describe an averaging procedure for vector valued forms and connections relative to a foliation preserving action of a compact and connected Lie group G. The basic fact here is that the averaging operator preserve the set of Poisson connections.The notion of a generalized Hannay-Berry connection is introduced in Section 4. We show that such class of Poisson connections naturally appears on a Poisson foliation equipped with a G-action admitting a pre-momentum map preserving each leaf of the foliation, we prove that the difference of a Poisson connection and its averaging takes values in the Hamiltonian vector field (Theorem \ref{THM1}). This fact is a generalization of the results on Hannay-Berry connections on fibrations mentioned before. The main consequence of this result is presented in Theorem \ref{theoavercon} which states that the set of Poisson connections with a Hamiltonian form of curvature $\operatorname{Conn}_{H}(M,\mathcal{F},P)$ and the subset of admissible connections are preserved under the averaging procedure with respect to the canonical action with pre-momentum map. Finally, in Section 5, we apply Theorem \ref{theoavercon} to the construction of families of G-invariant Dirac structures. Here the main results are presented in Theorem \ref{teodiracinv}.

\section{Poisson Connections on Foliated Manifolds}

Let $(M,\mathcal{F})$ be regular foliated manifold and $\mathbb{V}=T$
$\mathcal{F}\subset TM$ the tangent bundle called the\textit{ vertical
distribution}. A vector valued 1-form $\gamma\in\Omega^{1}(M;\mathbb{V})$ is
said to be a \textit{connection} on $(M,\mathcal{F})$ if the vector bundle
morphism $\gamma:TM\rightarrow\mathbb{V}$ satisfies the conditions:%
\begin{equation}
\gamma\circ\gamma=\gamma\text{ and }\operatorname{Im}\gamma=\mathbb{V}.
\label{DC1}%
\end{equation}
In fact, these conditions are equivalent to the following
\[
Y\in\Gamma(\mathbb{V})\Longrightarrow\gamma(Y)=Y.
\]
Then, $\mathbb{H}=\mathbb{H}^{\gamma}:=\ker\gamma$ is \ a normal bundle of
$\mathcal{F}$, called the \textit{horizontal subbundle} (with respect to the
leaf space $M\diagup\mathcal{F}$). It is clear that $\operatorname{id}%
_{TM}-\gamma$ is just the projection to $\mathbb{H}$ along $\mathbb{V}$.

On the contrary, given a normal bundle $\mathbb{H}$ of $\mathcal{F}$, one can
define the associated connection as the projection $\gamma=\gamma^{\mathbb{H}%
}:=\operatorname*{pr}_{2}$ to $\mathbb{V}$ according to the decomposition
\begin{equation}
TM=\mathbb{H}\oplus\mathbb{V}. \label{SP1}%
\end{equation}
Then, the cotangent bundle splits as follows
\begin{equation}
T^{\ast}M=\mathbb{V}^{0}\oplus\mathbb{H}^{0}, \label{SP2}%
\end{equation}
where $\mathbb{V}^{0}$ and $\mathbb{H}^{0}$ are the annihilators of
$\mathbb{V}$ and $\mathbb{H}$, respectively. These decompositions give rise to
$\gamma$-dependent bigrading of differential forms and tensor fields on $M$.
In particular, for any $X\in\mathfrak{X}(M)$ and $\alpha\in\Omega^{1}(M)$, we
have
\[
X=X_{1,0}+X_{0,1}\text{ and }\alpha=\alpha_{1,0}+\alpha_{0,1},
\]
where
\[
X_{1,0}=(\operatorname{id}_{TM}-\gamma)(X)\in\Gamma(\mathbb{H})
\]
and
\[
\alpha_{1,0}=(\operatorname{id}_{T^{\ast}M}-\gamma^{\ast})(\alpha)\in
\Gamma(\mathbb{V}^{0})
\]
are horizontal components and $X_{0,1}$ and $\alpha_{0,1}$ are vertical . Here
$\gamma^{\ast}:T^{\ast}M\rightarrow T^{\ast}M$ is the adjoint of $\gamma$.

Moreover, the exterior differential of forms on $M$ has the following
bigraded decompostion $d=d^\gamma_{1,0}+d^\gamma_{2,-1}+d^\gamma_{0,1}$ associated with decomposition (\ref{SP1}) 
(see, \cite{VaisLecGeoPoisMfds,VaisCoupPoisJAcStr}). The operator $d^\gamma_{1,0}$ is called the \textit{covariant exterior derivative} and it is defined by
\begin{equation}
d^\gamma_{1,0} \alpha (X_0, X_1,\ldots,X_k):= d\alpha ((\operatorname{id}-\gamma)X_0,\ldots,(\operatorname{id}-\gamma) X_k ),\label{covarder}
\end{equation}
for all $\alpha\in \Omega^k(M)$ and all $X_0,X_1,\ldots, X_k \in \Gamma(TM)$. In general, the covariant exterior derivative is not a coboundary operator.

The \textit{ curvature} of a connection
$\gamma$ on $(M,\mathcal{F})$ is a vector valued 2-form $\operatorname{Curv}^{\gamma}\in\Omega
^{2}(M;\mathbb{V})$ on $M$ given by
\begin{equation}
\operatorname{Curv}^{\gamma}:=\frac{1}{2}[\gamma,\gamma]_{\operatorname*{FN}},
\label{FN1}%
\end{equation}
here $[\cdot,\cdot]_{\operatorname*{FN}}$ denotes the Fr\"{o}licher-Nijenhuis bracket
\cite{Kolar}. \ Denote the space of all \textit{projectable vector fields} on
$(M,\mathcal{F})$ by
\[
\mathfrak{X}_{\operatorname*{pr}}(M,\mathcal{F})=\{Z\in\mathfrak{X}%
(M)\mid\lbrack Z,\Gamma(\mathbb{V})]\subset\Gamma(\mathbb{V})\}.
\]
The space of all (local) projectable vector fields which are tangent to the
horizontal subbundle of a connection $\gamma$ will be denoted by
$\Gamma_{\operatorname*{pr}}(\mathbb{H}^{\gamma})$. It follows from 
(\ref{FN1})
that
\begin{equation}
\operatorname{Curv}^{\gamma}(Z_{1},Z_{2})=\gamma([Z_{1},Z_{2}])\label{curvconn}
\end{equation}
for any $Z_{1},Z_{2}\in\Gamma_{\operatorname*{pr}}(\mathbb{H}^{\gamma})$.

It is well-known that the set of all connections is an affine space. Indeed,
fixing a connection $\gamma$ on $(M,\mathcal{F})$, it is easy to see that any
other connection $\tilde{\gamma}$ is of the form
\begin{equation*}
\tilde{\gamma}=\gamma-\Xi,
\end{equation*}
where the vector bundle morphism $\Xi:TM\rightarrow TM$ is called the
\textit{connection difference form} and satisfies the conditions%
\[
\operatorname{im}\Xi\subseteq\mathbb{V\subseteq}\ker\Xi\text{.}%
\]
The horizontal subbundle associated to $\tilde{\gamma}$ is given by
\[
\mathbb{H}^{\tilde{\gamma}}=\left(  \operatorname{Id}+\Xi\right)
(\mathbb{H}^{\gamma}).
\]
and hence
\[
\Gamma_{\operatorname*{pr}}(\mathbb{H}^{\tilde{\gamma}})=\{\tilde{Z}%
=Z+\Xi(Z)\mid Z\in\Gamma_{\operatorname*{pr}}(\mathbb{H}^{\gamma})\}.
\]
It follows from here and (\ref{FN1}) that we have the following transition rule for
the curvature:%
\begin{align}
\operatorname{Curv}^{\tilde{\gamma}}(Z_{1},Z_{2}) =  & \operatorname{Curv}%
^{\gamma}(Z_{1},Z_{2})+[\Xi(Z_{1}),\Xi(Z_{2})]+\label{CV1}\\
&  \lbrack\Xi(Z_{1}),Z_{2}]-[\Xi(Z_{2}),Z_{1}]-\Xi([Z_{1},Z_{2}])\nonumber
\end{align}
for $Z_{1},Z_{2}\in\Gamma_{\operatorname*{pr}}(\mathbb{H}^{\gamma})$.

A \textbf{\textit{Poisson foliation}} is a triple $(M,\mathcal{F},P)$ consisting of a regular foliated manifold $(M,\mathcal{F})$ equipped with a vertical Poisson bivector
field $P\in\Gamma(\wedge^{2}\mathbb{V}),$ $[P,P]_{\operatorname*{SCH}}=0$. Thus, the Poisson structure $P$ is characterized by the property: every
symplectic leaf of $P$ belongs to the leaf of $\mathcal{F}$.

A \textit{connection} $\gamma$ is said to be \textit{Poisson} on
$(M,\mathcal{F},P)$ if every (local) $\mathbb{H}^{\gamma}$-tangent projectable
vector field $Z\in\Gamma_{\operatorname*{pr}}(\mathbb{H}^{\gamma})$ is Poisson
on $(M,P)$, that is, $L_{Z}P=0$. In this case, for every $\operatorname{Curv}%
^{\gamma}(Z_{1},Z_{2})$ is a vertical Poisson vector field, for every
$Z_{1},Z_{2}\in\Gamma_{\operatorname*{pr}}(\mathbb{H}^{\gamma})$.



\section{The Averaging Procedure}

First, we recall the averaging procedure for connections on a regular foliated
manifold $(M,\mathcal{F})$ .

Let $G$ be a \textit{compact and connected} Lie group and $\mathfrak{g}$ its Lie algebra. Suppose that we are
given an action $\Phi:G\times M\rightarrow M$ of $G$ which \textit{preserves the foliation} $d_{m}\Phi_{g}\mathbb{V}_{m}=\mathbb{V}_{\Phi_{g}(m)}$,
$\forall g\in G$. Equivalently
\begin{equation}
Y\in\Gamma(\mathbb{V})\Longrightarrow\Phi_{g}^{\ast}Y\in\Gamma(\mathbb{V}%
).\label{FP}%
\end{equation}
For every $\xi\in\mathfrak{g}$, the corresponding infinitesimal generator of the
$G$-action is denoted by $\xi_{M}$,
\[
\xi_{M}(p):=\left.\frac{d}{dt}\right|_{t=0}\Phi_{\exp(t\xi)}(p),\text{ \ }p\in M.
\]
Condition (\ref{FP}) implies that each infinitesimal generator is a projectable
vector field,%
\[
\xi_{M}\in\mathfrak{X}_{\operatorname*{pr}}(M,\mathcal{F})\text{ \ }\forall
a\in\mathfrak{g}.
\]
As a consequence, the $G$-action preserves the space of all projectable vector
fields%
\[
\Phi_{g}^{\ast}(\mathfrak{X}_{\operatorname*{pr}}(M,\mathcal{F}))=\mathfrak{X}%
_{\operatorname*{pr}}(M,\mathcal{F}).
\]
Let $\Omega^{k}(M;TM)$ the space of vector valued $k$-form. For any $K\in\Omega^{k}(M;TM)$, the  $G$-\textit{average} of $K$ is the vector valued for
$\langle K\rangle ^{G}\in\Omega^{k}(M;TM)$ defined by the standard formula:
\[
\bar{K}=\langle K\rangle ^{G}:=\int_{G}\Phi_{g}^{\ast}Kdg
\]
Here, the pull-back $\Phi_{g}^{\ast}K$ of $K$ is given by
\[
(\Phi_{g}^{\ast}K)(Y_{1},...,Y_{k})=\Phi_{g}^{\ast}(K(\Phi_{g}{}_{\ast}%
Y_{1},...,\Phi_{g}{}_{\ast}Y_{k}))
\]
for $Y_{1},...,Y_{k}\in\mathfrak{X}(M)$ and the integral is taken with respect to the normalized Haar measure $dg$ is on $G$, $\int_{G}dg=1$. 

Recall that a vector valued $k$-form $K$ is said to be $G$-\textit{invariant} if
$\Phi_{g}^{\ast}K=K$ $\forall g\in G$. Since the group $G$ is connected, this invariance condition can be represented in the infinitesimal terms: $L_{\xi_{M}%
}K=0$  $\forall a\in\mathfrak{g}$. It is clear that the $G$-average $\langle K\rangle ^{G}$ is $G$-invariant for any $K$. 
We have the following invariance criterion: $K$ is $G$-invariant if and only if $\langle K\rangle ^{G}=K$.

Property (\ref{FP}) implies that the averaging operator preserves the set of all connections. In other words, for any connection $\gamma$ on
$(M,\mathcal{F})$, its $G$-average $\bar{\gamma}=\langle\gamma\rangle ^{G}$is a
$G$-invariant vector valued 1-form which again satisfies the conditions in (\ref{DC1}). From the property that the Fr\"{o}licher-Nijenhuis bracket is a natural
operation with respect to the pull-back, it follows that the curvature form of
$\bar{\gamma}$ is also $G$-invariant,
\begin{equation}
\langle\operatorname{Curv}^{\bar{\gamma}}\rangle ^{G}=\operatorname{Curv}^{\bar{\gamma}}.
\label{IK}%
\end{equation}
Indeed,
\[
\Phi_{g}^{\ast}\operatorname{Curv}^{\bar{\gamma}}=\frac{1}{2}\Phi_{g}^{\ast
}[\gamma,\gamma]_{\operatorname*{FN}}=\frac{1}{2}[\Phi_{g}^{\ast}\bar{\gamma
},\Phi_{g}^{\ast}\bar{\gamma}]_{\operatorname*{FN}}=\operatorname{Curv}%
^{\bar{\gamma}}%
\]

Now, consider the connection difference form%
\begin{equation}
\Xi^{G}:=\gamma-\langle\gamma\rangle ^{G}\in\Omega^{1}(M;\mathbb{V}). \label{CD1}%
\end{equation}

\begin{lemma}\label{lemmrepre}
We have the following representation%
\begin{equation}
\Xi^{G}=\int_{G}\int_{0}^{1}\Phi_{\exp(ta)}^{\ast}[\gamma,\xi_{M}
]_{\operatorname*{FN}}\ dtdg \label{CD2}%
\end{equation}
\end{lemma}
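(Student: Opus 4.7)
The plan is to interpolate between $\gamma$ and $\Phi_{g}^{\ast}\gamma$ along the one-parameter subgroup through $g\in G$, convert the difference into a $t$-integral of a Lie derivative by the fundamental theorem of calculus, and then average over $G$.

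First, I will exploit compactness and connectedness of $G$ to know that $\exp\colon\mathfrak{g}\to G$ is surjective; for each $g\in G$ I select some $\xi=\xi(g)\in\mathfrak{g}$ with $\exp\xi=g$. Although the choice is not canonical, the inner integral produced below depends only on the endpoints $e$ and $g$ of the curve $t\mapsto\exp(t\xi)$, so the right-hand side of (\ref{CD2}) is unambiguous.

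Second, fixing $\xi$ and using that $\xi_{M}$ generates the flow $\Phi_{\exp(t\xi)}$, the standard differentiation rule for pull-backs along a flow gives
\begin{equation*}
\frac{d}{dt}\bigl(\Phi_{\exp(t\xi)}^{\ast}\gamma\bigr)=\Phi_{\exp(t\xi)}^{\ast}L_{\xi_{M}}\gamma.
\end{equation*}
For the vector-valued $1$-form $\gamma$, the Lie derivative is precisely the Fr\"{o}licher-Nijenhuis bracket with the vector field $\xi_{M}$, namely $L_{\xi_{M}}\gamma=[\xi_{M},\gamma]_{\operatorname*{FN}}$, and graded antisymmetry of the Fr\"{o}licher-Nijenhuis bracket converts this into $[\xi_{M},\gamma]_{\operatorname*{FN}}=-[\gamma,\xi_{M}]_{\operatorname*{FN}}$.

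Third, integrating in $t$ from $0$ to $1$ produces
\begin{equation*}
\Phi_{g}^{\ast}\gamma-\gamma=-\int_{0}^{1}\Phi_{\exp(t\xi)}^{\ast}[\gamma,\xi_{M}]_{\operatorname*{FN}}\,dt,
\end{equation*}
and integrating both sides against the normalized Haar measure $dg$, together with the definition (\ref{CD1}) of $\Xi^{G}$ and an application of Fubini, yields (\ref{CD2}).

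The step that needs most justification is the identification $L_{X}K=[X,K]_{\operatorname*{FN}}$ for a vector field $X$ and a vector-valued form $K$, together with the sign produced by graded antisymmetry of the Fr\"{o}licher-Nijenhuis bracket; the non-uniqueness of the section $g\mapsto\xi(g)$ is harmless, as the $t$-integral depends only on $g=\exp\xi$, so no measurability subtleties arise at the averaging stage.
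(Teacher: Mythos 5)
Your proposal is correct and follows essentially the same route as the paper: the fundamental theorem of calculus along the flow of $\xi_{M}$, the identity $L_{\xi_{M}}\gamma=-[\gamma,\xi_{M}]_{\operatorname*{FN}}$, and averaging over the Haar measure. Your explicit remarks on the surjectivity of $\exp$ for a compact connected group and on the independence of the inner integral from the choice of $\xi(g)$ are a welcome clarification of a point the paper leaves implicit, but they do not change the argument.
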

\begin{proof}
By the fundamental theorem of calculus, we obtain
\begin{equation}\label{equationXi1}
\Phi^*_{\exp(a)}\gamma-\gamma=
\int_0^1 \Phi^*_{\exp(ta)}
L_{\xi_M}\gamma\ d t.
\end{equation}
Integrating the equality (\ref{equationXi1}) with respect
to the Haar measure, we get
$$\langle\gamma \rangle _G-\gamma=\int_G\int_0^1
\Phi^*_{\exp(ta)}L_{\xi_M}\gamma\ 
d t\ d g,$$
From (\ref{CD1}) and the identity $L_{\xi_M}\gamma=-[\gamma,\xi_M]_{FN}$, it follows that
\begin{equation}
\Xi^G=\int_G\int_0^1\Phi^*_{\exp(ta)}[\gamma,\xi_M]_{FN} d t d g.\label{diffaver}
\end{equation}
\end{proof}

For connections on foliated manifolds, we also have the following invariance criteria.

\begin{proposition}\label{propinvconn}
For a given connection $\gamma$ on $(M,\mathcal{F})$ and a foliation
preserving action $\Phi:G\times M\rightarrow M$ of a compact connected Lie
group $G$ , the following conditions are equivalent:

\begin{itemize}

\item[(i)] $\gamma$ is $G$-invariant;

\item[(ii)] $\langle\gamma\rangle ^{G}=\gamma$;

\item[(iii)] for every $a\in\mathfrak{g}$
\[
[\gamma,\xi_{M}]_{\operatorname*{FN}}=0;
\]

\item[(iv)] the horizontal distribution $\mathbb{H}^{\gamma}$ is $G$-invariant,%
\[
d_{m}\Phi_{g}(\mathbb{H}_{m}^{\gamma})=\mathbb{H}_{\Phi_{g}(m)}^{\gamma}\forall g\in G;
\]

\item[(v)] the connection difference form $\Xi^{G}$ is zero.

\end{itemize}
\end{proposition}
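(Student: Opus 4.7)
The plan is to prove the equivalences by threading through a short cycle rather than verifying all ten implications separately. The equivalence (i) $\Leftrightarrow$ (ii) is already recorded as the invariance criterion for vector-valued forms stated just before Lemma \ref{lemmrepre}, and (ii) $\Leftrightarrow$ (v) is immediate from the definition (\ref{CD1}) of $\Xi^{G}$. So the only substantive tasks are to relate the connection-level invariance (i) to the infinitesimal condition (iii) and to the distribution-level condition (iv).

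For (i) $\Leftrightarrow$ (iii), I would invoke the standing assumption that $G$ is compact and connected. Since any $g\in G$ can be joined to the identity by a product of exponentials, the equality $\Phi_{g}^{\ast}\gamma=\gamma$ for all $g$ is equivalent to the family of infinitesimal conditions $L_{\xi_{M}}\gamma=0$ for all $\xi\in\mathfrak{g}$. Using the identity $L_{\xi_{M}}\gamma=-[\gamma,\xi_{M}]_{\operatorname*{FN}}$, which is a standard property of the Frölicher--Nijenhuis bracket (and already used in the proof of Lemma \ref{lemmrepre}), this is the same as condition (iii). Alternatively, one can obtain (iii) $\Rightarrow$ (v) directly from formula (\ref{CD2}), and then close the loop by noting that (v) $\Rightarrow$ (ii) $\Rightarrow$ (i) $\Rightarrow$ (iii) via the invariance criterion together with the fundamental-theorem-of-calculus identity (\ref{equationXi1}) applied in reverse (if $\gamma$ is $G$-invariant, $\Phi_{\exp(t\xi)}^{\ast}\gamma=\gamma$ for all $t$, so differentiating at $t=0$ gives $L_{\xi_{M}}\gamma=0$).

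For (i) $\Leftrightarrow$ (iv), I would use the splitting (\ref{SP1}) together with the fact that the foliation-preserving hypothesis (\ref{FP}) already makes $\mathbb{V}$ a $G$-invariant subbundle. In the direction (i) $\Rightarrow$ (iv), if $\gamma$ is $G$-invariant, then its kernel $\mathbb{H}^{\gamma}$ is carried to itself by each $d_{m}\Phi_{g}$. In the direction (iv) $\Rightarrow$ (i), if $\mathbb{H}^{\gamma}$ is $G$-invariant, then both summands in $TM=\mathbb{H}^{\gamma}\oplus\mathbb{V}$ are $G$-invariant, and since $\gamma$ is characterized intrinsically as the projection onto $\mathbb{V}$ along $\mathbb{H}^{\gamma}$, it commutes with every $d\Phi_{g}$, i.e.\ $\Phi_{g}^{\ast}\gamma=\gamma$.

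I do not foresee a serious obstacle: once the correct infinitesimal dictionary $L_{\xi_{M}}\gamma=-[\gamma,\xi_{M}]_{\operatorname*{FN}}$ is invoked, every step reduces either to a definitional rewrite, to the compactness-plus-connectedness argument that lets one integrate infinitesimal invariance to global invariance (using the normalized Haar measure), or to the intrinsic description of a connection as a projection along its horizontal subbundle. The mildest care is needed in (iv) $\Rightarrow$ (i): one must remember that $\mathbb{V}$ itself is $G$-invariant (which is exactly the foliation-preserving hypothesis), so that the whole splitting, and hence the projection $\gamma$, is preserved.
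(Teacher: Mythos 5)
Your proposal is correct and follows essentially the same route as the paper: (i)$\Leftrightarrow$(ii) via the invariance criterion for averages, (i)$\Leftrightarrow$(iii) via the identity $L_{\xi_{M}}\gamma=-[\gamma,\xi_{M}]_{\operatorname*{FN}}$ combined with connectedness of $G$ to integrate the infinitesimal condition, (i)$\Leftrightarrow$(iv) via the characterization of $\gamma$ as the projection determined by the $G$-invariant splitting $TM=\mathbb{H}^{\gamma}\oplus\mathbb{V}$, and (ii)$\Leftrightarrow$(v) directly from (\ref{CD1}). The only cosmetic difference is that you also sketch an optional shortcut (iii)$\Rightarrow$(v) through the representation (\ref{CD2}), which the paper does not use but which is equally valid.
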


\begin{proof}
The equivalence between $(i)$ and $(ii)$ follows by straight forward computations. The implication (i) $\Rightarrow$(iii) follows from the relations:
\begin{equation}
\Phi^*_{\exp(t\xi)}[\gamma,\xi_M]_{FN}=
-\Phi^*_{\exp(t\xi)}L_{\xi_M}\gamma=-\left.\frac{d}{d t}
\right.\Phi^*_{\exp(t\xi)}\gamma .\label{frolbra}
\end{equation}
Conversely, condition (iii) together with \eqref{frolbra} and the connectness of G imply the invariance condition (i). Here we use the fact \cite{WusLieGpSqExp}: every element of a connected Lie group is the product of $\exp(\xi_1)$ and $\exp(\xi_2)$ for some $\xi_1,\xi_2\in\mathfrak{g}$. The equivalence between $(i)$ and $(iv)$ follows from the fact that, the $G$-invariance of $\gamma$ is equivalent to the equation
$$(\gamma)_{\Phi_g(m)}
\circ T_m\Phi_g=T_m\Phi_g\circ (\gamma)_{m}.$$
Finally, the equivalence between $(ii)$ and $(v)$ follows directly from (\ref{CD1}).
\end{proof}

Next, we formulate some key properties of the averaged connection in the case of the leaf tangent $G$-action.

\begin{lemma}\label{lemleaftang}
Assume that the $G$-action on $(M,\mathcal{F})$ is leaf tangent,%
\begin{equation}
\xi_{M}\in\Gamma(\mathbb{V})\text{ }\forall \xi\in\mathfrak{g}.\label{LT}%
\end{equation}
Then, for every connection $\gamma$ on $(M,\mathcal{F})$ the following
assertions hold:
\begin{itemize}

\item[(a)] $\gamma$ is $G$-invariant if and only if
\begin{equation}
[ Z,\xi_{M}]=0 \ \ \forall Z\in\Gamma_{\operatorname*{pr}}(\mathbb{H}^{\gamma}),\ \xi\in\mathfrak{g}.\label{condinfgen}
\end{equation}

\item[(b)] The space of horizontal projectable vector fields associated with the
averaged connection $\bar{\gamma}=$ $\langle\gamma\rangle ^{G}$ is described as
\begin{equation}\label{HG1}
\Gamma_{\operatorname*{pr}}(\mathbb{H}^{\bar{\gamma}})=\{\langle Z\rangle ^{G}\mid
Z\in\Gamma_{\operatorname*{pr}}(\mathbb{H}^{\gamma})\}.
\end{equation}

\item[(c)] For all $Z\in\Gamma_{\mathrm{pr}}(\mathbb{H}^{\gamma})$,
\begin{equation}\label{EX1}
\displaystyle\Xi^G(Z)=-\int_G \int_0^1\Phi^*_{\exp(t\xi)}[Z,\xi_M]dt dg.
\end{equation}
\end{itemize}
\end{lemma}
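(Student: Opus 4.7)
The plan is to prove the three assertions in order, taking Proposition \ref{propinvconn}(iii) and Lemma \ref{lemmrepre} as the main inputs and exploiting the leaf-tangency hypothesis (\ref{LT}) at every stage. For (a), I expand the Fr\"olicher--Nijenhuis bracket via the standard identity $[\gamma, \xi_M]_{\operatorname*{FN}}(Y) = [\gamma(Y), \xi_M] - \gamma([Y, \xi_M])$ and evaluate it on a local frame of $TM$ built from projectable vertical and horizontal sections. On vertical $Y$ the result vanishes because $[Y, \xi_M] \in \Gamma(\mathbb{V})$ is fixed by $\gamma$ (involutivity of $\mathbb{V}$ together with (\ref{LT})); on a horizontal projectable field $Z$ the first summand disappears and projectability forces $[Z, \xi_M] \in \Gamma(\mathbb{V})$, giving $[\gamma, \xi_M]_{\operatorname*{FN}}(Z) = -[Z, \xi_M]$. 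Since $[\gamma, \xi_M]_{\operatorname*{FN}}$ is a tensor field, its vanishing is equivalent to (\ref{condinfgen}), which combined with Proposition \ref{propinvconn}(iii) yields (a).

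For (b), the central observation is that leaf-tangency upgrades the foliation-preservation (\ref{FP}) to the stronger assertion $\Phi_g^*Z - Z \in \Gamma(\mathbb{V})$ for every $Z \in \Gamma_{\operatorname*{pr}}(\mathbb{H}^\gamma)$. I prove this by differentiating $t \mapsto \Phi_{\exp(t\xi)}^*Z$: the derivative $\Phi_{\exp(t\xi)}^*[\xi_M, Z]$ is vertical because $[\xi_M, Z] \in \Gamma(\mathbb{V})$ (projectability of $Z$ plus (\ref{LT})) and pull-back by a foliation-preserving diffeomorphism sends $\Gamma(\mathbb{V})$ to itself. Extending from one-parameter subgroups to arbitrary $g \in G$ uses the fact (already invoked in Proposition \ref{propinvconn}) that every element of a compact connected Lie group is a product of two exponentials. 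Averaging then gives $\langle Z\rangle^G - Z \in \Gamma(\mathbb{V})$, and the identification $\Phi_g^*Z - Z = -\Phi_g^*V_g$, where $V_g$ denotes the vertical defect of $\Phi_{g\,*}Z$, shows that $\langle Z\rangle^G = Z - \bar\gamma(Z) = Z + \Xi^G(Z)$. Since $\mathbb{H}^{\bar\gamma} = (\mathrm{Id} + \Xi^G)(\mathbb{H}^\gamma)$ and averaging preserves $\mathfrak{X}_{\operatorname*{pr}}(M,\mathcal{F})$, this places $\langle Z\rangle^G$ in $\Gamma_{\operatorname*{pr}}(\mathbb{H}^{\bar\gamma})$. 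For the reverse inclusion, given $\tilde{Z} \in \Gamma_{\operatorname*{pr}}(\mathbb{H}^{\bar\gamma})$ I set $Z := (\mathrm{Id} - \gamma)\tilde{Z}$; this $Z$ lies in $\Gamma_{\operatorname*{pr}}(\mathbb{H}^\gamma)$ as the difference of two projectable fields, and the identity $\tilde{Z} = Z + \Xi^G(Z)$ (which uses $\bar\gamma(\tilde{Z}) = 0$) combined with the equality $\langle Z\rangle^G = Z + \Xi^G(Z)$ just established gives $\tilde{Z} = \langle Z\rangle^G$.

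For (c), I substitute $Z$ into the representation of Lemma \ref{lemmrepre} and use naturality of the pull-back to move the argument inside:
$$\Xi^G(Z) = \int_G \int_0^1 \Phi_{\exp(t\xi)}^*\bigl([\gamma, \xi_M]_{\operatorname*{FN}}(\Phi_{\exp(t\xi)\,*}Z)\bigr)\, dt\, dg.$$
The leaf-tangency argument developed in (b), applied to push-forwards, shows $\Phi_{\exp(t\xi)\,*}Z - Z \in \Gamma(\mathbb{V})$, so the $\gamma$-horizontal part of $\Phi_{\exp(t\xi)\,*}Z$ equals $Z$ while its vertical defect contributes nothing. Invoking pointwise tensoriality of $[\gamma, \xi_M]_{\operatorname*{FN}}$ together with the formulas derived in (a)---vanishing on vertical arguments and evaluating to $-[Z, \xi_M]$ on $Z$---the inner expression collapses to $-[Z, \xi_M]$, and (\ref{EX1}) follows. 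The main obstacle in the whole argument is the bookkeeping in (c): one must coherently track the interaction between pull-back of a vector-valued $1$-form and push-forward of its argument, and justify the pointwise collapse of the FN bracket on a non-horizontal vector field via its $\gamma$-decomposition; once the leaf-tangency stability of the $\gamma$-horizontal component of $\Phi_{\exp(t\xi)\,*}Z$ is in hand, the computation is routine.
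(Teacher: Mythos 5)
Your proof is correct. Part (a) follows the paper's own computation (and in fact fixes a sign the paper garbles: the paper writes $[\gamma,\xi_M]_{\operatorname*{FN}}(Z)=\gamma([Z,\xi_M])$, whereas the correct value is $-\gamma([Z,\xi_M])=-[Z,\xi_M]$, which is what you obtain and what formula \eqref{EX1} actually requires). Parts (b) and (c), however, run on a genuinely different engine. The paper's key step is to apply (a) to the $G$-invariant connection $\bar\gamma$: every $\tilde Z=Z+\Xi^G(Z)\in\Gamma_{\operatorname*{pr}}(\mathbb{H}^{\bar\gamma})$ commutes with all $\xi_M$ and is therefore $G$-invariant; averaging $\tilde Z=Z+\Xi^G(Z)$ and using $\langle\Xi^G(\tilde Z)\rangle^G=\langle\Xi^G\rangle^G(\tilde Z)=0$ gives $\tilde Z=\langle Z\rangle^G$, and the same invariance of $\tilde Z$ is what lets the pull-back slide past the evaluation in (c). You instead prove from leaf-tangency and the fundamental theorem of calculus that $\Phi_g^*Z-Z\in\Gamma(\mathbb{V})$, and then compute
\begin{equation*}
\bar\gamma(Z)=\int_G\Phi_g^*\bigl(\gamma(\Phi_{g*}Z)\bigr)\,dg=\int_G\Phi_g^*\bigl(\Phi_{g*}Z-Z\bigr)\,dg=Z-\langle Z\rangle^G,
\end{equation*}
which yields $\langle Z\rangle^G=Z+\Xi^G(Z)$ without ever invoking invariance of $\tilde Z$; in (c) the verticality of $\Phi_{\exp(t\xi)*}Z-Z$ together with tensoriality of $[\gamma,\xi_M]_{\operatorname*{FN}}$ and the vanishing on vertical arguments established in (a) collapses the integrand to $-[Z,\xi_M]$. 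Both routes are sound. Yours is more self-contained and makes explicit a point the paper leaves terse: the equality $\langle Z+\Xi^G(Z)\rangle^G=\langle Z\rangle^G$ is justified there only by ``the average of $\Xi^G$ is zero,'' which as stated conflates $\langle\Xi^G(Z)\rangle^G$ with $\langle\Xi^G\rangle^G(Z)$ and really needs the $G$-invariance of $\tilde Z$ to go through. The paper's route is shorter because it recycles (a) for the averaged connection instead of re-deriving the verticality of the defect; your reverse inclusion in (b) (setting $Z:=(\operatorname{id}-\gamma)\tilde Z$ and noting that vertical fields are automatically projectable) is also fine and matches the affine-space description of connections the paper quotes.
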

\begin{proof}
\begin{itemize}
\item[(a)] 
For each $Z\in\Gamma_{\mathrm{pr}}(\mathbb{H}^{\gamma})$, we have
$$[\gamma,\xi_M]_{\mathrm{FN}}(Z)=[\gamma(Z),\xi_M]
-\gamma([Z,\xi_M])=\gamma([Z,\xi_M]),$$
Then, from here and  Proposition \ref{propinvconn}, it follows that the G-invariance of $\gamma$ is equivalent to the condition that $[Z,\xi_M]$ is a horizontal vector field. If the action is leaf tangent, then the vector field  $[Z,\xi_M]$ is always vertical and hence equals zero. Conversely, if condition \eqref{condinfgen} holds, then $[\gamma,\xi_M]_{\mathrm{FN}}(Z)=0$ $\forall Z\in\Gamma_{\operatorname*{pr}}(\mathbb{H}^{\gamma})$ and $[\gamma,\xi_M]_{\mathrm{FN}}(V)=0$ for each $V\in\Gamma(\mathbb{V})$.

\item[(b)] Each vector field $\widetilde{Z}\in\Gamma_{\mathrm{pr}}
(\mathbb{H}^{\bar{\gamma}})$, is of the form 
$\widetilde{Z}=Z+\Xi^G(Z)$
with $Z\in\Gamma_{\mathrm{pr}}(\mathbb{H}^{\gamma})$.
Moreover $\widetilde{Z}$ is a $G$-invariant vector field 
by the item (a). From here and the fact that the average of $\Xi^G$ is zero average, we get that
$$\widetilde{Z}=\langle \widetilde{Z}\rangle ^G=\langle Z\rangle ^G.$$
This proves (\ref{HG1}).

\item[(c)] For every $Z\in\Gamma_{\mathrm{pr}}(\mathbb{H}^{\gamma})$,
and the $G$-invariant vector field
$\widetilde{Z}:=Z+\Xi^G(Z)\in\Gamma_{\mathrm{pr}}
(\mathbb{H}^{\bar{\gamma}})$ it follows from formula \eqref{diffaver} that
\begin{equation}\label{equach4auxXi1}
\begin{array}{rcl}
\Xi^G(Z)=\Xi^G(\widetilde{Z})&=&\int_G\int_0^1
\left(\Phi^*_{\exp(t\xi )}[\gamma,\xi_M]_{FN}
\right)\left(\widetilde{Z}\right)  dt dg,
\vspace{2mm}\\
&=&\displaystyle\int_G\int_0^1
\Phi^*_{\exp(t\xi )}\left([\gamma,\xi_M]_{FN}
\left(\widetilde{Z}\right)\right) d t d g.
\end{array}
\end{equation}
Then, formula \eqref{EX1} follows from \eqref{equach4auxXi1} and the equality:
\begin{equation}\label{equach4auxXi2}
[\gamma,\xi_M]_{FN}(\widetilde{Z})
=[\gamma,\xi_M]_{FN}(Z).
\end{equation}
\end{itemize}
\end{proof}

Now, let us turn to the Poisson case. The following result states conditions under which the averaging of a Poisson connection inherits the property of being Poisson. 

\begin{lemma}
Let $(M,\mathcal{F},P)$ be a Poisson foliation. Suppose that the $G$-action is leaf tangent (condition (\ref{LT})) and canonical relative to $P$,%
\[
L_{\xi_{M}}P=0 \ \forall a\in\mathfrak{g}.%
\]
Then, the $G$-average $\bar{\gamma}$ of every Poisson connection $\gamma$ on
$(M,\mathcal{F},P)$ is again Poisson. Moreover, the curvature of
$\bar{\gamma}$ has the following property: if $Z_{1},Z_{2}%
\in\Gamma_{\operatorname*{pr}}(\mathbb{H}^{\gamma})$ then 
$\operatorname{Curv}^{\bar{\gamma}}(Z_{1},Z_{2})$ is a 
Poisson vertical G-invariant vector field.
\end{lemma}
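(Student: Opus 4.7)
The argument splits into two parts corresponding to the two assertions in the statement.

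For the first claim, that $\bar{\gamma}$ is Poisson, the plan is to verify directly that every horizontal projectable vector field $\widetilde{Z}\in\Gamma_{\mathrm{pr}}(\mathbb{H}^{\bar{\gamma}})$ preserves $P$. By Lemma~\ref{lemleaftang}(b), such a $\widetilde{Z}$ can be written as $\widetilde{Z}=\langle Z\rangle^{G}=\int_{G}\Phi_{g}^{\ast}Z\,dg$ for some $Z\in\Gamma_{\mathrm{pr}}(\mathbb{H}^{\gamma})$. Since $G$ is compact and connected and the action is canonical ($L_{\xi_{M}}P=0$ for every $\xi\in\mathfrak{g}$), the same connectedness argument used in the proof of Proposition~\ref{propinvconn} yields $\Phi_{g}^{\ast}P=P$ for all $g\in G$. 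Interchanging the Lie derivative with the Haar integral and using naturality, one obtains
\begin{equation*}
L_{\widetilde{Z}}P=\int_{G}L_{\Phi_{g}^{\ast}Z}P\,dg=\int_{G}L_{\Phi_{g}^{\ast}Z}(\Phi_{g}^{\ast}P)\,dg=\int_{G}\Phi_{g}^{\ast}(L_{Z}P)\,dg=0,
\end{equation*}
because $\gamma$ is Poisson.

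For the curvature assertion, the idea is to reduce to the general fact recalled at the end of Section~2: for a Poisson connection, the curvature evaluated on horizontal projectable fields is a vertical Poisson vector field. The key observation is that $\operatorname{Curv}^{\bar{\gamma}}=\frac{1}{2}[\bar{\gamma},\bar{\gamma}]_{\mathrm{FN}}$ is \emph{horizontal} with respect to $\bar{\gamma}$ in the sense that it vanishes whenever one of its arguments lies in $\mathbb{V}$; this follows directly from the Fr\"olicher-Nijenhuis formula applied to the projection $\bar{\gamma}$, using $\bar{\gamma}|_{\mathbb{V}}=\mathrm{id}_{\mathbb{V}}$ and the involutivity of $\mathbb{V}$. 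Given $Z_{1},Z_{2}\in\Gamma_{\mathrm{pr}}(\mathbb{H}^{\gamma})$, I set $\widetilde{Z}_{i}:=Z_{i}+\Xi^{G}(Z_{i})=\langle Z_{i}\rangle^{G}\in\Gamma_{\mathrm{pr}}(\mathbb{H}^{\bar{\gamma}})$. Since $\Xi^{G}(Z_{i})\in\Gamma(\mathbb{V})$, tensoriality together with horizontality yields
\begin{equation*}
\operatorname{Curv}^{\bar{\gamma}}(Z_{1},Z_{2})=\operatorname{Curv}^{\bar{\gamma}}(\widetilde{Z}_{1},\widetilde{Z}_{2}).
\end{equation*}
By the first part of the theorem $\bar{\gamma}$ is Poisson, so applying \eqref{curvconn} to $\bar{\gamma}$ and the Section~2 fact, the right-hand side is a vertical Poisson vector field. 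Finally, by Lemma~\ref{lemleaftang}(a) each $\widetilde{Z}_{i}$ is $G$-invariant, and by \eqref{IK} so is $\operatorname{Curv}^{\bar{\gamma}}$, whence $\operatorname{Curv}^{\bar{\gamma}}(\widetilde{Z}_{1},\widetilde{Z}_{2})$ is $G$-invariant as well.

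The main subtle points I anticipate are the two "automatic" facts used above: the pointwise identity $\Phi_{g}^{\ast}P=P$ for every $g\in G$, which must be deduced from the infinitesimal hypothesis via connectedness and the Wüstner-type product decomposition already invoked earlier in the paper, and the horizontality of the 2-form $\operatorname{Curv}^{\bar{\gamma}}$. The latter is routine from $\bar{\gamma}^{2}=\bar{\gamma}$, but should be stated explicitly, since it is precisely what allows one to pass from arguments in $\mathbb{H}^{\gamma}$ to arguments in $\mathbb{H}^{\bar{\gamma}}$ without grinding through the transition rule~\eqref{CV1}.
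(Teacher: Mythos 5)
Your proof is correct and follows essentially the same route as the paper: averaging the identity $L_ZP=0$ over $G$ to show $\bar{\gamma}$ is Poisson, and then reducing the curvature claim to formula \eqref{curvconn} applied to $\bar{\gamma}$. The only difference is that you make explicit the horizontality of $\operatorname{Curv}^{\bar{\gamma}}$, which justifies replacing $Z_i$ by $\widetilde{Z}_i=\langle Z_i\rangle^G$; the paper leaves this step implicit, so your added detail is a genuine (and welcome) clarification rather than a departure.
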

\begin{proof}
Taking into account that the action is canonical and $\gamma$ is a Poisson connection, by standard properties of the averaging operator we obtain
 $$0=\left\langle L_Z P\right\rangle _G=L_{\left\langle Z\right\rangle _G}P,$$
for all $Z\in\Gamma(\mathbb{H}^{\gamma})$, that is, the average of a $\gamma$-horizontal projectable vector field is Poisson. Under the assumption that the action is leaf tangent, point (b) of Lemma \ref{lemleaftang} implies that the $\bar{\gamma}$-horizontal projectable vector fields are Poisson and hence, $\bar{\gamma}$ is a Poisson connection. The last assertion of the lemma follows directly from \eqref{curvconn}.

\end{proof}

\section{Generalized Hannay-Berry Connections}

Let $(M,\mathcal{F},P)$ be a Poisson foliation. Suppose we are given an action
$\Phi:G\times M\rightarrow M$ of a connected, compact Lie group $G$ which
admits a \textit{pre-momentum map} in the sense of 
\cite{Ginz-1}, that is, there
exists a linear map $\mu:\mathfrak{g}\rightarrow\Omega^{1}(M)$ such that
\begin{equation}
\xi_{M}=P^{\sharp}\mu_{\xi},\label{C1}%
\end{equation}
where
\begin{equation}
\mathbf{i}_{P^{\sharp}\alpha}d\mu_{\xi}=0\label{C2}%
\end{equation}
for all $\xi\in\mathfrak{g}$. This condition means that the pull-back of
the 1-form $\mu_{\xi}$ to each symplectic leaf of $P$ is closed.

\begin{remark}
The notion of a pre-momentum map, introduced by V. Ginzburg in \cite{Ginz-1}, in general, involves only condition (4.1) which says that the infinitesimal generators are tangent to the symplectic foliation of $P$. Property \eqref{C2} appears in \cite{Ginz-1} as an extra condition under the study of some problems related to equivariant Poisson cohomology.

\end{remark}
Note also that conditions (\ref{C1}), (\ref{C2}) imply that the $G$-action is canonical on  $(M,P)$. Indeed, for every 
$\alpha,\beta\in\Omega^1(M)$
\[
(L_{\xi_{M}}P)(\alpha,\beta)=(L_{P^{\sharp}\mu_{\xi}}P)(\alpha,\beta)=d\mu
_{\xi}(P^{\sharp}\alpha,P^{\sharp}\beta)=0.
\]

For every $\beta\in\Gamma(\bigwedge^q\mathbb{V}^0)$ and $Q\in\Gamma(\bigwedge^1\mathbb{V}^0)$,
denote by $\{Q\wedge\beta\}_P$ the element of $\Gamma(\bigwedge^{q+1}
\mathbb{V}^0)$ given by
$$\{Q\wedge\beta\}_P(Z_0,Z_1,...,Z_q):=\sum_{i=0}^{q}(-1)^i
\{Q(Z_i),\beta(Z_0,Z_1,...,\hat{Z_i},...,Z_q)\}_P$$
where $\{\cdot,\cdot\}$ is the Poisson bracket associated to $P$.

\begin{theorem}\label{THM1}
Under the assumptions (\ref{C1}), (\ref{C2}), for any Poisson connection $\gamma$ on
$(M,\mathcal{F},P)$, the connection difference form $\Xi^{G}$ $=\gamma
-\langle\gamma\rangle^{G}$ takes values in Hamiltonian vector fields of the vertical
Poisson structure $P$,
\begin{equation}
\Xi^{G}(Z)=P^{\sharp}dQ(Z) \  \forall Z\in\Gamma_{\operatorname*{pr}%
}(\mathbb{H}^{\gamma}),\label{MR1}%
\end{equation}
where $Q\in\Gamma(\mathbb{V}^{0})$ is a horizontal 1-form defined by
\begin{equation}
Q(Z):=-\int_{G}\int_{0}^{1}\Phi_{\exp(ta)}^{\ast}\mathbf{i}_{Z}(\mu_{a})_{1,0}dtdg.\label{MR2}%
\end{equation}
The curvature of the averaged connection $\bar{\gamma}=\langle \gamma\rangle ^{G}$ is given by%
\begin{equation}\label{CV2}
\begin{array}{rcl}
\displaystyle
\operatorname{Curv}^{\langle \gamma\rangle ^{G}}(Z_{1},Z_{2})&=&\operatorname{Curv}^{\gamma
}(Z_{1},Z_{2})+\vspace{2mm}\\
&&\displaystyle P^{\sharp}d\left(  d^\gamma_{1,0}Q(Z_{1},Z_{2})
+\frac{1}{2}\{Q\wedge Q\}_{P}(Z_1,Z_2)\right),
\end{array}
\end{equation}
for all $Z_{1},Z_{2}\in\Gamma_{\operatorname*{pr}}(\mathbb{H}^{\gamma})$
\end{theorem}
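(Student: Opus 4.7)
The plan is to evaluate $\Xi^G(Z)$ pointwise on a horizontal projectable Poisson vector field $Z$, recognize the integrand as a Hamiltonian vector field whose potential depends linearly on $Z$, and then exchange $P^\sharp d$ with the Haar--interval integral. The curvature formula will follow by applying the affine transition rule (\ref{CV1}) and unwinding the resulting expressions via the Hamiltonian-bracket calculus.

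First, I observe that (\ref{C1}) forces $\xi_M=P^\sharp\mu_\xi\in\Gamma(\mathbb{V})$ (symplectic leaves of $P$ sit inside leaves of $\mathcal{F}$), so the $G$-action is leaf-tangent and Lemma \ref{lemleaftang}(c) applies, giving the starting representation
\[
\Xi^G(Z)=-\int_G\int_0^1\Phi^*_{\exp(t\xi)}[Z,\xi_M]\,dt\,dg.
\]
The crucial step is to rewrite $[Z,\xi_M]$ as a Hamiltonian vector field. Because $Z$ is Poisson, $L_Z P=0$ and hence $[Z,P^\sharp\mu_\xi]=P^\sharp L_Z\mu_\xi$. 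Cartan's formula gives $L_Z\mu_\xi=\mathbf{i}_Z d\mu_\xi+d(\mathbf{i}_Z\mu_\xi)$. Condition (\ref{C2}) is exactly what is needed to kill the first piece after applying $P^\sharp$: for every 1-form $\beta$,
\[
\langle P^\sharp(\mathbf{i}_Z d\mu_\xi),\beta\rangle=d\mu_\xi(Z,P^\sharp\beta)=-(\mathbf{i}_{P^\sharp\beta}d\mu_\xi)(Z)=0.
\]
Since $Z$ is horizontal, $\mathbf{i}_Z\mu_\xi=\mathbf{i}_Z(\mu_\xi)_{1,0}$. Therefore $[Z,\xi_M]=P^\sharp d\bigl(\mathbf{i}_Z(\mu_\xi)_{1,0}\bigr)$. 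Because the action is canonical (already deduced from (\ref{C1})--(\ref{C2}) in the text), $\Phi^*_g$ commutes with $P^\sharp d$ on functions, and so $P^\sharp d$ may be pulled outside the Haar--interval integral, yielding (\ref{MR1}) with $Q$ as in (\ref{MR2}).

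For the curvature formula, I apply (\ref{CV1}) to $\bar\gamma=\gamma-\Xi^G$. The Hamiltonian identity $[P^\sharp df,P^\sharp dh]=P^\sharp d\{f,h\}_P$ gives
\[
[\Xi^G(Z_1),\Xi^G(Z_2)]=P^\sharp d\{Q(Z_1),Q(Z_2)\}_P=\tfrac12 P^\sharp d\{Q\wedge Q\}_P(Z_1,Z_2).
\]
For the mixed terms, the Poisson property of each $Z_j$ gives $[\Xi^G(Z_i),Z_j]=-P^\sharp d(Z_j\cdot Q(Z_i))$, and since $Q$ is horizontal, $\Xi^G([Z_1,Z_2])=P^\sharp dQ([Z_1,Z_2])$. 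Substituting the Cartan formula $dQ(Z_1,Z_2)=Z_1\cdot Q(Z_2)-Z_2\cdot Q(Z_1)-Q([Z_1,Z_2])$, and noting that $dQ(Z_1,Z_2)=d^\gamma_{1,0}Q(Z_1,Z_2)$ on horizontal entries, the function-level contributions of $[\Xi^G(Z_1),Z_2]-[\Xi^G(Z_2),Z_1]-\Xi^G([Z_1,Z_2])$ telescope inside $P^\sharp d$ to $d^\gamma_{1,0}Q(Z_1,Z_2)$, producing (\ref{CV2}).

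The main obstacle is the bigrading bookkeeping in the second step: condition (\ref{C2}) is precisely what converts the potentially obstructive term $\mathbf{i}_Z d\mu_\xi$ into zero after $P^\sharp$, and the reduction $\mathbf{i}_Z\mu_\xi=\mathbf{i}_Z(\mu_\xi)_{1,0}$ depends crucially on $Z$ being horizontal. Once this Hamiltonian rewriting is in hand, the remaining steps---exchanging $P^\sharp d$ with the integral via the canonical hypothesis and unwinding (\ref{CV1})---are algebraic routine.
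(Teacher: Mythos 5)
Your proposal is correct and follows essentially the same route as the paper: Lemma \ref{lemleaftang}(c) plus (\ref{C1}) to write $\Xi^G(Z)$ as an average of $L_Z P^\sharp\mu_\xi$, the Cartan-formula rewriting in which (\ref{C2}) kills $P^\sharp(\mathbf{i}_Z d\mu_\xi)$ and horizontality of $Z$ gives the $(\mu_\xi)_{1,0}$ reduction, and finally the transition rule (\ref{CV1}) combined with the Hamiltonian-bracket identities to obtain (\ref{CV2}). You merely spell out more explicitly two points the paper leaves implicit (that (\ref{C1}) makes the action leaf-tangent, and the pointwise verification that (\ref{C2}) annihilates the obstruction term), so no substantive difference.
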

\begin{proof}
Combining item (c) of Lemma \ref{lemleaftang} and condition (\ref{C1}), we obtain
\begin{equation*}
\Xi^G(Z)=
-\int_G\int_0^1 \Phi^*_{\exp(t\xi )}
L_ZP^{\sharp}(\mu_{a}) d t d g,
\end{equation*}
where $Z\in\Gamma_{\mathrm{pr}}(\mathbb{H}^{\gamma})$. By using relations \eqref{C1} and \eqref{C2}, we get that
$$\Xi^G(Z)=-P^{\sharp}d\left(\int_G\int_0^1 
\Phi^*_{\exp(t\xi )} \mathbf{i}_Z\mu_{a}\ 
d t\ d g\right).$$
Taking into account that $\mathbf{i}_Z(\mu_{a})=\mathbf{i}_Z(\mu_{a})_{1,0} $ for $Z\in\Gamma_{\mathrm{pr}}(\mathbb{H}^{\gamma})$, we verify \eqref{MR1}.
Now, from (\ref{MR1}), we obtain the following identities
\begin{equation}\label{equch4auxRcon1} 
    \begin{split} 
    \Xi^G([Z_1,Z_2])=P^{\sharp}d (Q([Z_1,Z_2])), \ \ \ [\Xi^G(Z_1),Z_2]&=-P^{\sharp}d(L_{Z_2}Q(Z_1)), \\
\  \text{and}  \ \ \ \ \ [\Xi^G(Z_2),Z_1]=-P^{\sharp}d(L_{Z_1}Q(Z_2)),
  \end{split} 
\end{equation} 
for all $Z_1,Z_2\in\Gamma_{\mathrm{pr}}(\mathbb{H}^{\gamma})$. Moreover,
\begin{equation}\label{equch4auxRcon4}
[\Xi^G(Z_1),\Xi^G(Z_2)]=P^{\sharp}d \left(\{Q(Z_1),Q(Z_2)\}_P\right).
\end{equation}
These relations together with \eqref{CV1} imply \eqref{CV2}.

\end{proof}

\begin{corollary}\label{genhordist}
\bigskip The horizontal distribution of the averaged connection $\bar{\gamma}$
is generated by the $G$-invariant Poisson vector fields of the form
\begin{equation}
\langle Z\rangle ^{G}=Z+P^{\sharp}dQ(Z),\label{dishorgamma}
\end{equation}
where $Z$ runs over $\Gamma_{\operatorname*{pr}}(\mathbb{H}^{\gamma})$.
\end{corollary}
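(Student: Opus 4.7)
The plan is to assemble three ingredients already established earlier in the paper: (i) the general transition rule for projectable horizontal vector fields under a change of connection from Section 2, (ii) item (b) of Lemma \ref{lemleaftang} which identifies $\Gamma_{\mathrm{pr}}(\mathbb{H}^{\bar{\gamma}})$ with the $G$-averages of elements of $\Gamma_{\mathrm{pr}}(\mathbb{H}^{\gamma})$, and (iii) the explicit Hamiltonian formula $\Xi^{G}(Z)=P^{\sharp}dQ(Z)$ supplied by Theorem \ref{THM1}.

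First I would verify that the hypotheses needed to invoke these results are in force. The pre-momentum condition (\ref{C1}) reads $\xi_{M}=P^{\sharp}\mu_{\xi}$, and since $P\in\Gamma(\wedge^{2}\mathbb{V})$ is vertical, every infinitesimal generator is automatically vertical; hence the $G$-action is leaf tangent, which is the standing hypothesis of Lemma \ref{lemleaftang}. The short computation recorded just before Theorem \ref{THM1} also shows that (\ref{C1})--(\ref{C2}) force the action to be canonical, so the preceding Poisson-averaging lemma applies and guarantees that $\bar{\gamma}$ is itself a Poisson connection.

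Given $Z\in\Gamma_{\mathrm{pr}}(\mathbb{H}^{\gamma})$, I would set $\widetilde{Z}:=Z+\Xi^{G}(Z)$. By the connection-difference discussion in Section 2, $\widetilde{Z}\in\Gamma_{\mathrm{pr}}(\mathbb{H}^{\bar{\gamma}})$, and Lemma \ref{lemleaftang}(b) then identifies $\widetilde{Z}=\langle Z\rangle^{G}$. Substituting (\ref{MR1}) yields $\langle Z\rangle^{G}=Z+P^{\sharp}dQ(Z)$, which is the asserted formula. The $G$-invariance is immediate from the definition of the average, the Poisson property follows from the Poisson-averaging lemma applied to $\bar{\gamma}$, and the fact that these vector fields pointwise generate $\mathbb{H}^{\bar{\gamma}}$ is a restatement of the identity $\Gamma_{\mathrm{pr}}(\mathbb{H}^{\bar{\gamma}})=\{\langle Z\rangle^{G}\mid Z\in\Gamma_{\mathrm{pr}}(\mathbb{H}^{\gamma})\}$ from Lemma \ref{lemleaftang}(b) combined with the standard fact that projectable horizontal sections locally span the horizontal subbundle. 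I do not foresee any substantive obstacle: the corollary is essentially a direct synthesis of results already in hand, the only subtlety being to parametrize $\bar{\gamma}$-horizontal projectable vector fields by $\gamma$-horizontal ones so that Theorem \ref{THM1} may be applied.
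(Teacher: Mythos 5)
Your argument is correct and is essentially the route the paper intends: the corollary is stated without proof as an immediate consequence of Theorem \ref{THM1} together with Lemma \ref{lemleaftang}(b), which is exactly the synthesis you carry out (parametrizing $\Gamma_{\operatorname*{pr}}(\mathbb{H}^{\bar{\gamma}})$ by $\widetilde{Z}=Z+\Xi^{G}(Z)=\langle Z\rangle^{G}$ and substituting $\Xi^{G}(Z)=P^{\sharp}dQ(Z)$). Your preliminary checks that the pre-momentum hypothesis forces the action to be leaf tangent and canonical are also the ones the paper relies on.
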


\begin{remark}
In the context of the Poisson cohomology of $(M,P)$, one can derive from Corollary \ref{genhordist} the following fact \cite{MYu-17}: for every $\gamma
$-horizontal $k$-cocycle $A\in\Gamma(\wedge^{k}\mathbb{H}^{\gamma}),$
$[P,A]_{\operatorname*{SCH}}=0$ its Poisson cohomology class is represented by
a $G$-invariant $k$-tensor. This partially recovers the results on the
equivariant Poisson cohomology due to \cite{Ginz-1}. 

\end{remark}

Now, let us consider some special cases. It is clear that conditions (\ref{C1}),(\ref{C2}) hold in the case when the $G$-action is \textit{locally Hamiltonian} on $(M,P)$, that is,
\[
d\mu_{a}=0\ \forall a\in\mathfrak{g.}%
\]
In particular, in the standard case \cite{MaMoRa-90} of a \textit{Hamiltonian}  $G$-\textit{action with momentum map }
$\mathbb{J}:M\rightarrow\mathfrak{g}^{\ast}$,%
\[
\xi_{M}=P^{\sharp}d\mathbb{J}_{\xi},
\]
formula (\ref{MR2}) for the horizontal 1-form $Q$ reads
\[
Q=-\int_{G}\int_{0}^{1}\Phi_{\exp(t\xi )}^{\ast}(d_{1,0}^{\gamma}\mathbb{J}%
_{\xi})dtdg.
\]




Theorem \ref{THM1} presents a generalized version of the results on Hannay-Berry connections obtained in \cite{MaMoRa-90} in the case of a Poisson fiber bundle equipped with Hamiltonian $G$-action with momentum map. Thus, in the case of a $G$-action with pre-momentum map $\mu$ on a Poisson foliation $(M,\mathcal{F},P)$, the averaged Poisson connection  
$\bar{\gamma}=\langle\gamma\rangle^{G}$ can be called a \textit{generalized Hannay-Berry connection}.

\section{Poisson connections with Hamiltonian  Curvature}
 Starting with a Poisson foliation $(M,\mathcal{F},P)$, denote by $\operatorname{Con}_{\operatorname{H}}(M,\mathcal{F},P)$ the set of all Poisson connections
$\gamma$ on the Poisson foliation whose curvature form takes values in the
space of Hamiltonian vector fields of the vertical Poisson structure $P$,%
\begin{equation}
\operatorname{Curv}^{\gamma}(Z_{1},Z_{2})=-P^{\sharp}d\sigma^{\gamma}%
(Z_{1},Z_{2})\text{ \ }\forall Z_{1},Z_{2}\in\Gamma_{\operatorname*{pr}%
}(\mathbb{H}^{\gamma}),\label{HAM1}%
\end{equation}
for a certain horizontal 2-from $\sigma^{\gamma}\in\Gamma(\wedge^{2}%
\mathbb{V}^{0})$ which is called a \textit{Hamiltonian form of the curvature}.

Denote by $\mathcal{C}^{k}=\mathcal{C}^{k}(M,\mathcal{F},P)$ the space of all
horizontal $k$-forms $\beta\in\Gamma(\wedge^{k}\mathbb{V}^{0})$ which take
values in the space $\operatorname{Casim}(M,P)$ of Casimir functions of $P$,%
\[
\beta(X_{1},...,X_{k})\in\operatorname{Casim}(M,P)\text{ \ \ }\forall X_{i}%
\in\mathfrak{X}_{\operatorname*{pr}}(M,\mathcal{F}).
\]
Then, it is clear that a Hamiltonian form $\sigma^{\gamma}$ of the curvature
in (\ref{HAM1}) is defined up to the transformations
\begin{equation}
\sigma^{\gamma}\mapsto\sigma^{\gamma}+C \ \ \forall\  C\in\mathcal{C}%
^{2}. \label{FR}%
\end{equation}

In particular, if $\sigma^\gamma \in \mathcal{C}^2$ then the connection is flat and the covariant exterior derivative $d^\gamma_{1,0}$ is a coboundary operator.
\begin{definition}
A Poisson connection $\gamma\in\operatorname{Con}_{\operatorname{H}%
}(M,\mathcal{F},P)$ is said to be admissible if there exists a Hamiltonian form
$\sigma=\sigma^{\gamma}\in\Gamma(\wedge^{2}\mathbb{V}^{0})$ of the
curvature in \eqref{MF1} which satisfies the the $\gamma$-covariant constancy condition condition
\[
d_{1,0}^{\gamma}\sigma=0.
\]
\end{definition}

Notice that, in general, for a given $\gamma\in\operatorname{Con}_{\operatorname{H}}(M,\mathcal{F},P)$, by the \textit{Bianchi identity}, we have $d_{1,0}^{\gamma}\sigma\in\mathcal{C}^{3}$. Moreover, $d^{\gamma}_{1,0} (\mathcal{C}^k)\subset \mathcal{C}^{k+1}.$ Hence, one can define the operator $\bar
{d}^{\gamma}:\mathcal{C}^{k}\rightarrow\mathcal{C}^{k+1}$  just by $\bar
{d}^{\gamma} = \left.d_{1,0}^{\gamma}\right|_{\mathcal{C}^{k}}$ which results to be a coboundary operator. Thus, one can associate to the setup $(M,\mathcal{F},P,\gamma)$ the cochain complex $(\oplus_{k=0}^{\infty
}\mathcal{C}^{k},\bar{d}^{\gamma})$ called the foliated de Rham-Casimir complex, \cite{Vo-01,Vo-05,AEYu-17}. Taking into
account that the freedom in the choice of $\sigma^{\gamma}$ is given by the
transformation (\ref{FR}), we derive the following 
criterion for $\gamma$ to be
admissible: $d_{1,0}^{\gamma}\sigma$ is a 3-cocycle relative to $\bar
{d}^{\gamma}$ and its cohomology class is trivial.

Now, suppose that we are given an action on $M$ of a connected, compact Lie group $G$ with a pre-momentum map $\mu$.  Since all infinitesimal generators $\xi_{M}$ of the $G$-action are tangent to
the symplectic foliation of $P$, we have
\[
k\in\operatorname{Casim}(M,P)\Longrightarrow L_{\xi_{M}}k=0 \ \forall
a\in\mathfrak{g}.%
\]
and hence any horizontal 2-form $C\in\mathcal{C}^{2}$ is $G$-invariant,
$L_{\xi_{M}}C=0$. It follows that the $G$-invariance of a Hamiltonian form
$\sigma^{\gamma}$ is preserved under transformation (\ref{FR}).

Since the $G$-action is canonical relative to $P$ and preserves the vertical
distribution $\mathbb{V}$, it is easy to see that the group $G$ naturally acts on the set of Poisson
connections $\operatorname{Con}_{\operatorname{H}}(M,\mathcal{F},P)$,
$\gamma\mapsto\Phi_{g}^{\ast}\gamma$. Moreover, as a consequence of 
Theorem \ref{THM1}, we get the following fact.

\begin{theorem}\label{theoavercon}
The averaging procedure with respect to the canonical action $\Phi:G\times
M\rightarrow M$ \ with a pre-momentum map $\mu$ preserves the set
$\operatorname{Con}_{\operatorname{H}}(M,\mathcal{F},P)$, that is,%
\[
\gamma\in\operatorname{Con}_{\operatorname{H}}(M,\mathcal{F},P)\Longrightarrow
\ \langle \gamma\rangle \ \in\operatorname{Con}_{\operatorname{H}}(M,\mathcal{F},P).
\]
where the Hamiltonian form of the curvature of $\langle \gamma\rangle $ is given by
\begin{equation}
\bar{\sigma}=\sigma^{\langle \gamma\rangle }:=\sigma^{\gamma}-\left(  d_{1,0}^{\gamma
}Q+\frac{1}{2}\{Q\wedge Q\}_{P}\right)  .\label{MF1}%
\end{equation}
and the horizontal 1-form $Q\in\Gamma(\mathbb{V}^{0})$ is defined in terms of
$\mu$ by formula (\ref{MR2}). Moreover, if $\gamma$ is admissible so also $\langle \gamma\rangle $; that is,
\[
d_{1,0}^{\gamma}\sigma=0 \Longrightarrow
d_{1,0}^{\langle \gamma\rangle }\bar{\sigma}=0.
\]
\end{theorem}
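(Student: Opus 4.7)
The first claim is almost immediate from Theorem \ref{THM1}. The pre-momentum hypothesis $\xi_M = P^\sharp \mu_\xi$ combined with the verticality of $P$ forces $\xi_M \in \Gamma(\mathbb{V})$, so the $G$-action is leaf-tangent and Lemma \ref{lemleaftang} applies. Substituting the hypothesis $\operatorname{Curv}^\gamma(Z_1,Z_2) = -P^\sharp d\sigma^\gamma(Z_1,Z_2)$ into formula (\ref{CV2}) of Theorem \ref{THM1} yields, for all $Z_1,Z_2\in\Gamma_{\operatorname{pr}}(\mathbb{H}^\gamma)$,
\[
\operatorname{Curv}^{\langle\gamma\rangle}(Z_1,Z_2) = -P^\sharp d\bar\sigma(Z_1,Z_2)
\]
with $\bar\sigma$ defined by (\ref{MF1}). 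Horizontality $\bar\sigma\in\Gamma(\wedge^2\mathbb{V}^0)$ is clear because $\sigma^\gamma$ and $Q$ are horizontal, $d^\gamma_{1,0}$ preserves horizontality by definition (\ref{covarder}), and $\{Q\wedge Q\}_P$ annihilates vertical arguments as $Q$ does. Since $\bar\sigma$ is horizontal and $\Xi^G(Z_i)\in\mathbb{V}$, replacing $Z_i$ by its $\bar\gamma$-horizontal lift $\langle Z_i\rangle^G = Z_i + \Xi^G(Z_i)$ (cf.\ Lemma \ref{lemleaftang}(b)) does not alter either side of the identity. Thus $\bar\sigma$ is a bona fide Hamiltonian form of the curvature of $\langle\gamma\rangle$, so $\langle\gamma\rangle\in\operatorname{Con}_H(M,\mathcal{F},P)$.

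For the admissibility statement, the plan is to evaluate $d^{\langle\gamma\rangle}_{1,0}\bar\sigma$ on triples $\tilde Z_i = \langle Z_i\rangle^G$ with $Z_i\in\Gamma_{\operatorname{pr}}(\mathbb{H}^\gamma)$, which by Lemma \ref{lemleaftang}(b) exhaust $\Gamma_{\operatorname{pr}}(\mathbb{H}^{\langle\gamma\rangle})$. Since $\tilde Z_i\in\mathbb{H}^{\langle\gamma\rangle}$, definition (\ref{covarder}) collapses to
\[
d^{\langle\gamma\rangle}_{1,0}\bar\sigma(\tilde Z_0,\tilde Z_1,\tilde Z_2) = d\bar\sigma(\tilde Z_0,\tilde Z_1,\tilde Z_2).
\]
I would expand the right-hand side by Cartan's formula and substitute the decomposition $\bar\sigma = \sigma^\gamma - d^\gamma_{1,0}Q - \tfrac{1}{2}\{Q\wedge Q\}_P$, grouping the result into three packets: a $\sigma^\gamma$-packet that, thanks to $d^\gamma_{1,0}\sigma^\gamma = 0$, reduces to correction contributions produced by $\tilde Z_i - Z_i = \Xi^G(Z_i) = P^\sharp dQ(Z_i)$; a $d^\gamma_{1,0}Q$-packet; and a $\{Q\wedge Q\}_P$-packet. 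The corrections, being Hamiltonian vector fields of the components $Q(Z_i)$, feed brackets of the form $\{Q(Z_i),Q(Z_j)\}_P$ into the calculation. The expected cancellation is a Maurer-Cartan-type identity
\[
d^\gamma_{1,0}(d^\gamma_{1,0}Q) + \tfrac{1}{2}d^\gamma_{1,0}\{Q\wedge Q\}_P + (\text{transfer terms from }\gamma\text{ to }\bar\gamma) = 0,
\]
which organizes all surviving contributions into a form that vanishes.

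The main obstacle is the combinatorial bookkeeping in this cancellation: $d^\gamma_{1,0}$ is not a coboundary operator, so $(d^\gamma_{1,0})^2 Q$ does not vanish on its own and must be matched against the Jacobi identity $[P,P]_{\operatorname{SCH}} = 0$ applied to $Q\wedge Q\wedge Q$, together with the transition-rule terms from (\ref{CV1}) relating $\operatorname{Curv}^\gamma$ and $\operatorname{Curv}^{\bar\gamma}$. As a potentially cleaner alternative, one observes that $d^{\langle\gamma\rangle}_{1,0}\bar\sigma\in\mathcal{C}^3$ automatically by the Bianchi-type identity for admissible Hamiltonian forms; one may then work inside the foliated de Rham-Casimir complex $(\mathcal{C}^\bullet,\bar d^{\langle\gamma\rangle})$ and argue that the class $[d^{\langle\gamma\rangle}_{1,0}\bar\sigma]$ is the image under the averaging chain map of $[d^\gamma_{1,0}\sigma^\gamma] = 0$. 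This cohomological route sidesteps the most painful bracket identities, at the cost of first establishing the requisite chain-map property of the averaging operator on $\mathcal{C}^\bullet$.
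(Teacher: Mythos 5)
Your treatment of the first assertion is correct and coincides with the paper's: both deduce $\langle\gamma\rangle\in\operatorname{Con}_{\operatorname{H}}(M,\mathcal{F},P)$ and formula (\ref{MF1}) by substituting (\ref{HAM1}) into the curvature formula (\ref{CV2}) of Theorem \ref{THM1}. The problem is the admissibility claim, where you offer only a plan, and the plan stops exactly at the point where the actual work lies. You write that the cancellation should follow from ``a Maurer--Cartan-type identity'' in which $(d^{\gamma}_{1,0})^{2}Q$ is ``matched against'' the Jacobi identity and the transition-rule terms, but you neither state nor prove the identities that make this happen. The paper's proof rests on three concrete facts: (i) the transition rule $d^{\bar{\gamma}}_{1,0}\beta=d^{\gamma}_{1,0}\beta+\{Q\wedge\beta\}_{P}$ for horizontal forms $\beta$, which encodes the passage from $\mathbb{H}^{\gamma}$ to $\mathbb{H}^{\bar{\gamma}}=(\operatorname{id}+P^{\sharp}dQ)(\mathbb{H}^{\gamma})$; (ii) the curvature identity $(d^{\gamma}_{1,0})^{2}Q=\{Q\wedge\sigma^{\gamma}\}_{P}$, obtained from the bigraded decomposition $d=d^{\gamma}_{1,0}+d^{\gamma}_{0,1}+d^{\gamma}_{2,-1}$, the relation $(d^{\gamma}_{1,0})^{2}=-[d^{\gamma}_{0,1},d^{\gamma}_{2,-1}]$ forced by $d^{2}=0$, and hypothesis (\ref{HAM1}); and (iii) the Leibniz-type identity $d^{\gamma}_{1,0}\tfrac{1}{2}\{Q\wedge Q\}_{P}=-\{Q\wedge d^{\gamma}_{1,0}Q\}_{P}$, valid because $\gamma$ is Poisson. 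With these, $d^{\bar{\gamma}}_{1,0}\bar{\sigma}$ collapses term by term (the cubic term $\{Q\wedge\{Q\wedge Q\}_{P}\}_{P}$ dying by the Jacobi identity). Identity (ii) is the decisive ingredient — it is precisely what converts the non-vanishing of $(d^{\gamma}_{1,0})^{2}$ into a term that cancels against $\{Q\wedge\sigma^{\gamma}\}_{P}$ coming from (i) — and it is absent from your proposal.

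Your proposed cohomological shortcut does not close the gap either. The ``averaging chain map'' on the Casimir--de Rham complex is nowhere established in the paper, and even granting it, an argument at the level of cohomology classes in $(\mathcal{C}^{\bullet},\bar{d}^{\gamma})$ would at best show that $\bar{\gamma}$ is admissible after correcting $\bar{\sigma}$ by some Casimir $2$-form; it would not yield the stated identity $d^{\langle\gamma\rangle}_{1,0}\bar{\sigma}=0$ for the specific form $\bar{\sigma}$ defined in (\ref{MF1}). Note also that the Bianchi identity only guarantees $d^{\langle\gamma\rangle}_{1,0}\bar{\sigma}\in\mathcal{C}^{3}$ once you already know $\bar{\sigma}$ is a Hamiltonian form of $\operatorname{Curv}^{\langle\gamma\rangle}$, so the class you would need to kill lives in degree $3$, and nothing in your sketch identifies it with the image of the trivial class $[d^{\gamma}_{1,0}\sigma^{\gamma}]$. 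To complete the proof you should establish (i)--(iii) above and carry out the cancellation explicitly.
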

\begin{proof}
The first part of this result is a direct consequence of Theorem \ref{THM1}. In particular, the formula for the Hamiltonian form of $\langle \gamma\rangle $ follows from equation \eqref{CV2}. So, it remains to prove that the averaging procedure preserves the admissibility property. Assume that $d_{1,0}^{\gamma}\sigma=0.$
Since $\sigma\in\Gamma(\bigwedge^2\mathbb{V}^0)$,
the relation (\ref{CD1}) implies the formula 
\begin{equation}\label{EQA1}
d^{\bar{\gamma}}_{1,0}
\sigma=d^{\gamma}_{1,0}\sigma+\{Q\wedge\sigma\}_P.
\end{equation}
Recall that the exterior differential has the following bigraded decomposition $d= d^\gamma_{1,0}+d^\gamma_{0,1}+d^\gamma_{2,-1}$ depending on the connection $\gamma$. Taking account that $d^2=0$, we obtain the following identity 
$(d^{\gamma}_{1,0})^2=-[d_{0,1}^{\gamma},d_{2,-1}^{\gamma}]$.
In particular, for $Q\in\Omega^{1,0}(M)$, 
the equation (\ref{HAM1}) implies that
\begin{equation}\label{EQA2}
(d_{1,0}^{\gamma})^2Q= \{Q\wedge\sigma\}_P.
\end{equation}
On the other hand, since $\gamma$ is a Poisson connection, 
we obtain, by straightforward computation that
\begin{equation}\label{EqA3}
d_{1,0}^{\gamma}\frac{1}{2}\{Q\wedge Q\}_P=-\{Q\wedge d_{1,0}^{\gamma}Q\}_P
\end{equation}
By Theorem \ref{theoavercon} , relations 
(\ref{EQA1}), (\ref{EQA2}) and (\ref{EqA3}), it follows that 
$d_{1,0}^{\bar{\gamma}}\bar{\sigma}=0$.
\end{proof}

\begin{remark}
The $G$-invariance of the curvature $\operatorname{Curv}^{\langle \gamma\rangle }$ implies
only that
\[
L_{\xi_{M}}\bar{\sigma}\in\mathcal{C}^{2} \ \ \forall a\in\mathfrak{g}.
\]
\end{remark}


We end this section by formulating the usefull property of a pre-momentum map. 

\begin{proposition}\label{casimirinhor}
For each $\xi\in \mathfrak{g}$ and $Z\in \Gamma_{\operatorname{pr}}(\mathbb{H}^{\bar{\gamma}})$, $\mathrm{i}_Z\mu^\xi $ is a Casimir function.
\end{proposition}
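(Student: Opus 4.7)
The plan is to verify the Casimir condition directly, i.e.\ to show $P^{\sharp}d(\mathbf{i}_Z\mu_\xi)=0$. First I would split the differential via Cartan's magic formula
$$d(\mathbf{i}_Z\mu_\xi)=L_Z\mu_\xi-\mathbf{i}_Z d\mu_\xi,$$
and then show that $P^{\sharp}$ annihilates each term separately. For the second term, condition (\ref{C2}) states $d\mu_\xi(P^{\sharp}\beta,Y)=0$ for every $\beta\in\Omega^1(M)$ and every $Y\in\mathfrak{X}(M)$; specializing $Y=Z$ and using the antisymmetry of $P$ immediately gives $P^{\sharp}(\mathbf{i}_Z d\mu_\xi)=0$.

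For the first term I would unpack the structural properties already in place for $\bar{\gamma}$. Because $\xi_M=P^{\sharp}\mu_\xi$ lies in $\mathbb{V}$ (as $P\in\Gamma(\wedge^2\mathbb{V})$), the action is automatically leaf tangent, and since it is also canonical (noted after the Remark in Section 4), the Lemma just preceding Section 4 guarantees that $\bar{\gamma}$ is a Poisson connection. In particular, $Z$ is a Poisson vector field, so $L_Z$ commutes with $P^{\sharp}$ on $1$-forms. On the other hand, $\bar{\gamma}$ is $G$-invariant as an average, so Lemma \ref{lemleaftang}(a) applied to $\bar{\gamma}$ yields $[Z,\xi_M]=0$ for every $\xi\in\mathfrak{g}$. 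Combining,
$$P^{\sharp}L_Z\mu_\xi=L_Z(P^{\sharp}\mu_\xi)=L_Z\xi_M=[Z,\xi_M]=0,$$
and the Casimir property follows.

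The only non-routine step is the assembly: recognizing that the three ingredients I need — leaf-tangency of the action, the Poisson character of $\bar{\gamma}$, and its $G$-invariance — are all built into the pre-momentum hypothesis together with the averaging results of Section 3, so Lemma \ref{lemleaftang}(a) is directly applicable to $Z\in\Gamma_{\operatorname{pr}}(\mathbb{H}^{\bar{\gamma}})$. Once this is recognized, the argument reduces to a short application of Cartan's formula together with the defining identities (\ref{C1}) and (\ref{C2}) of a pre-momentum map; no genuine obstacle beyond sign/convention bookkeeping for $P^{\sharp}$ remains.
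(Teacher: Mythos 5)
Your proof is correct and rests on the same two ingredients as the paper's own argument — condition (\ref{C2}) and the vanishing of $P^{\sharp}L_{Z}\mu^{\xi}$ for a $G$-invariant Poisson vector field $Z\in\Gamma_{\operatorname{pr}}(\mathbb{H}^{\bar{\gamma}})$ — with Cartan's formula merely repackaging the paper's direct expansion of $d\mu^{\xi}(P^{\sharp}\alpha,Z)=0$. Your justification of $P^{\sharp}L_{Z}\mu^{\xi}=L_{Z}(P^{\sharp}\mu^{\xi})=[Z,\xi_{M}]=0$ via leaf-tangency, the averaging lemma for Poisson connections, and Lemma \ref{lemleaftang}(a) is in fact more explicit than the paper's terse appeal to $Z$ being a $G$-invariant Poisson vector field.
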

\begin{proof}
Let $Z\in \Gamma_{\operatorname{pr}}(\mathbb{H}^{\bar{\gamma}})$. Since $Z$ is a $G$- invariant Poisson vector field, it follows that $0=P^\sharp(L_Z \mu^\xi)$, for all $\xi\in \mathfrak{g}$. From this fact and condition \eqref{C2}, we have
\begin{eqnarray*}
0&=& d\mu^\xi(P^\sharp \alpha, Z)=L_{P^\sharp \alpha}(\mu^\xi(Z))-L_Z (\mu^\xi)(P^\sharp \alpha))-\mu^\xi([P^\sharp \alpha,Z]),\\
&=& L_{P^\sharp \alpha}(\mu\xi(Z))-\mu^\xi(P^\sharp( L_Z \alpha))+\mu^\xi([Z.P^\sharp \alpha]),\\
&=& d (\mu^\xi(Z) )(P^\sharp \alpha)= - \alpha (P^\sharp(\mathrm{i}_Z\mu^\xi  )),
\end{eqnarray*}
for every $\alpha \in \Omega^1(M)$. This implies that $\mathrm{i}_Z\mu^\xi $ is a Casimir function.
\end{proof}

\section{Adiabatic condition}

In the previous sections, we dealt with two structures compatibles with a foliated Poisson manifolds $(M, \mathcal{F},P)$:  a Poisson connection $\gamma$ and a $G$- action with pre-mometum map  $\mu :  \mathfrak{g}\rightarrow \Omega^1(M)$. In general, theses two structures are independent. Here.,we will relate these structures by the so-called \textit{adiabatic condition}, \cite{MaMoRa-90}. 

Suppose that a foliated Poisson manifold $(M, \mathcal{F},P)$ equipped with a $G$-action with a pre-momentum map $\mu$ is given. 

\begin{definition}
Given a Poisson conecction $\gamma$ on $(M, \mathcal{F},P)$. We say that  pre-momentum map $\mu$ satisfies the adiabatic condition (relative to $\gamma$) if 
\begin{equation}
\langle(\mathrm{id}_{T^*M}-\gamma^*) \mu^\xi \rangle=0, \ \forall \xi\in \mathfrak{g}. \label{adiacond}
\end{equation}
Here $\gamma^{\ast}:T^{\ast}M\rightarrow T^{\ast}M$ is the dual vector bundle morphism.
\end{definition}
In particular, in the case when the $G$-action on $(M,\mathcal{F},P)$ is
canonical with a momentum map $\mathbb{J}:\mathfrak{g}\rightarrow C^{\infty
}(M)$, we have $\mu^{\xi}=d\mathbb{J}^{\xi}$ and condition \eqref{adiacond} reads%
\[
\langle d_{1,0}^{\gamma}\mathbb{J}^{\xi}\rangle^{G}=0.
\]
This is just the adiabatic condition which was introduced in \cite{MaMoRa-90} for
Hamiltonian actions on Poisson fiber bundles.

The following observation says how to reformulate the adiabatic condition in terms of the averaged connection  $\bar{\gamma}$.

\begin{lemma}\label{adiabaticprop}
Let $\gamma$ be a Poisson connection on $(M,\mathcal{F}, P)$. Then,
\begin{equation}
(\mathrm{id}_{T^*M}-\bar{\gamma}^*) \mu^\xi =\langle(\mathrm{id}_{T^*M}-\gamma^*) \mu^\xi \rangle, \ \forall \xi\in\mathfrak{g}, \label{adiaprop}
\end{equation}
where $\bar\gamma = \langle \gamma \rangle$. Moreover, $\mu$ satisfies the adiabatic condition relative to $\gamma$ if and only if $(\mathrm{id}_{T^*M}-\bar{\gamma}^*) \mu^\xi =0$ for all $\xi\in \mathfrak{g}$.
\end{lemma}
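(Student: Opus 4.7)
The plan is to prove the identity (\ref{adiaprop}) by testing both sides against vector fields in $\Gamma_{\operatorname{pr}}(\mathbb{H}^{\bar\gamma})$, and then read off the ``iff'' directly. First note that $(\mathrm{id}_{T^*M}-\bar\gamma^*)$ is the projection $T^*M\to T^*M$ onto $\mathbb{V}^0$ along $\mathbb{H}^{\bar\gamma,0}$, so the LHS of (\ref{adiaprop}) lies in $\Gamma(\mathbb{V}^0)$; similarly $(\mathrm{id}-\gamma^*)\mu^\xi\in\Gamma(\mathbb{V}^0)$, and since the $G$-action preserves $\mathbb{V}$, $\Phi_g^*$ preserves $\mathbb{V}^0$, so the RHS also lies in $\Gamma(\mathbb{V}^0)$. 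Two 1-forms in $\Gamma(\mathbb{V}^0)$ coincide iff they agree on a complement of $\mathbb{V}$; by Corollary \ref{genhordist} such a complement is spanned by the $G$-invariant projectable vector fields $\tilde Z = Z + P^\sharp d(Q(Z))$ with $Z\in\Gamma_{\operatorname{pr}}(\mathbb{H}^\gamma)$.

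For such $\tilde Z$, the LHS of (\ref{adiaprop}) evaluates to $\mu^\xi(\tilde Z)$ because $\bar\gamma(\tilde Z)=0$. For the RHS, the $G$-invariance of $\tilde Z$ lets the pairing commute with the pullback, yielding
$$\langle(\mathrm{id}-\gamma^*)\mu^\xi\rangle^G(\tilde Z) \;=\; \langle\mu^\xi(\tilde Z)\rangle^G - \langle\mu^\xi(\gamma\tilde Z)\rangle^G.$$
By Proposition \ref{casimirinhor}, $\mu^\xi(\tilde Z)$ is a Casimir function, and every Casimir $f$ is $G$-invariant because $L_{\xi_M}f = df(P^\sharp\mu^\xi) = -\mu^\xi(P^\sharp df) = 0$; hence the first averaged term collapses to $\mu^\xi(\tilde Z)$. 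For the second term, apply Theorem \ref{THM1} to write $\gamma(\tilde Z) = \Xi^G(Z) = P^\sharp d(Q(Z))$ and use the duality identity $\mu^\xi(P^\sharp dh) = -L_{\xi_M}h$; the averaging is then killed by the standard fact $\langle L_{\xi_M}h\rangle^G=0$, which itself follows by writing $L_{\xi_M}h = \tfrac{d}{dt}\big|_0 \Phi_{\exp(t\xi)}^* h$, exchanging $d/dt$ with $\int_G$, and invoking the left invariance of the Haar measure. The RHS thus reduces to $\mu^\xi(\tilde Z)$, matching the LHS and establishing (\ref{adiaprop}).

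The second assertion is immediate once (\ref{adiaprop}) is in hand, since the adiabatic condition (\ref{adiacond}) is precisely the vanishing of the RHS of (\ref{adiaprop}). The main delicate point in the argument is the vanishing $\langle\mu^\xi(\gamma\tilde Z)\rangle^G=0$: it stitches together the explicit form of the connection difference from Theorem \ref{THM1}, the duality identity $\mu^\xi(P^\sharp dh) = -L_{\xi_M}h$, and the Haar-integration identity for $L_{\xi_M}$. Everything else is bookkeeping with the splitting $TM=\mathbb{H}^{\bar\gamma}\oplus\mathbb{V}$.
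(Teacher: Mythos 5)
Your proof is correct and follows essentially the same route as the paper's: reduce to $\bar\gamma$-horizontal $G$-invariant projectable fields, use Proposition \ref{casimirinhor} to get $G$-invariance of $\mu^\xi(\tilde Z)$, and kill the correction term $\langle\mu^\xi(\Xi^G(Z))\rangle=-\langle L_{\xi_M}Q(Z)\rangle$ via the pre-momentum map and Haar invariance. If anything, your spelling-out of why $\langle L_{\xi_M}h\rangle^G=0$ is cleaner than the paper's somewhat garbled line ``$\langle\mu^\xi(\Xi(Z))\rangle=P^\sharp(\langle dQ(Z)\rangle)=0$''.
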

\begin{proof}
Since the 1-forms in both sides of \eqref{adiaprop} vanish on vertical vector fields  as it can be easily prove it, we only need to check the equation \eqref{adiaprop} for horizontal vector fields. Let $Z\in \Gamma_{\operatorname{pr}}(\mathbb{H}^{\bar{\gamma}})$. By Proposition \ref{casimirinhor}, $(\mathrm{id}_{T^*M}-\bar{\gamma}^*) \mu^\xi(Z)=\mu^\xi(Z)$ is a Casimir function and therefore a $G$-invariant function. Thus,
\begin{equation*}
(\mathrm{id}_{T^*M}-\bar{\gamma}^*) \mu^\xi(Z)=\langle(\mathrm{id}_{T^*M}-\bar{\gamma}^*) \mu^\xi(Z)\rangle=\langle(\mathrm{id}_{T^*M}-\gamma^*) \mu^\xi(Z)\rangle + \langle( \mu^\xi(\Xi(Z))\rangle.
\end{equation*}
Since $\Xi(Z)=P^\sharp dQ(Z)$ and the action is canonical, we have $ \langle( \mu^\xi(\Xi(Z))\rangle=P^\sharp(\langle dQ(Z)\rangle) = 0 $. Hence,
\begin{equation*}
(\mathrm{id}_{T^*M}-\bar{\gamma}^*) \mu^\xi(Z)=\langle(\mathrm{id}_{T^*M}-\gamma^*) \mu^\xi(Z)\rangle=\langle(\mathrm{id}_{T^*M}-\gamma^*) \mu^\xi\rangle(Z),
\end{equation*}
for all $Z\in \Gamma_{\operatorname{pr}}(\mathbb{H}^{\bar{\gamma}})$.
\end{proof}

It follows from Lemma \ref{adiabaticprop} that if the pre-momentum map satisfies the adiabatic condition \eqref{adiacond} relative to $\gamma$ then it takes value in the $\bar{\gamma}$-vertical 1-forms, i.e., $\mu^\xi\in \Gamma(\mathbb{H}^{\bar{\gamma}})^0 $. Moreover, one can say that the Hannay-Berry connection of $\gamma$ satisfies the adiabatic condition if the condition \eqref{adiaprop} holds, or if the pre-momentum map takes values in the space of $\bar{\gamma}$-vertical 1-forms.

Now, we arrive at  the following generalized version of the axiomatic definition \cite{MaMoRa-90} of Hannay-Berry type connections satisfying the adiabatic condition.

\begin{theorem}\label{axionhann}
Given an Ehresmann-Poisson connection $\gamma$  on $(M,\mathcal{F},P)$, suppose that  there exists another Ehresmann connection
$\tilde{\gamma}$ on the Poisson foliation which satisfies the following
conditions for all $X\in\Gamma_{\operatorname*{pr}}(TM)$:%
\begin{equation}
\mathbf{i}_{(\operatorname{id}-\tilde{\gamma})(X)}\mu=0,\label{H1}%
\end{equation}
\begin{equation}
\Xi(X)=P^{\sharp}dQ(X),\label{H2}%
\end{equation}
where $\Xi=\gamma-\tilde{\gamma}$ and $Q\in\Gamma(\mathbb{V}^{0})$ is
horizontal 1-form such that
\begin{equation}
\langle Q(X)\rangle^{G}\in\operatorname{Casim}(M,P).\label{H3}%
\end{equation}
Then, $\tilde{\gamma}$. Furthermore, a connection $\tilde{\gamma}$ satisfying \eqref{H1}-\eqref{H3} exists if and only if
\begin{equation}
\langle\mathbf{i}_{(\operatorname{id}-\gamma)(X)}\mu\rangle^{G}=0\text{ \ }\forall
X\in\Gamma_{\operatorname*{pr}}(TM)\label{H4}%
\end{equation}
\end{theorem}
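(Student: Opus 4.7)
The substantive content of the statement is the equivalence: a connection $\tilde\gamma$ satisfying \eqref{H1}--\eqref{H3} exists if and only if the adiabatic condition \eqref{H4} holds. The truncated clause ``Then, $\tilde\gamma$'' I read as ``Then $\tilde\gamma$ is a generalized Hannay-Berry connection, necessarily coinciding with $\bar\gamma = \langle\gamma\rangle^G$.'' The plan is to establish the two implications of the equivalence separately and then sketch the identification $\tilde\gamma = \bar\gamma$.

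\textbf{Necessity \eqref{H1}--\eqref{H3} $\Rightarrow$ \eqref{H4}.} Writing $\gamma = \tilde\gamma + \Xi$, split
\[
\mathbf{i}_{(\operatorname{id}-\gamma)X}\mu^\xi = \mu^\xi\bigl((\operatorname{id}-\tilde\gamma)X\bigr) - \mu^\xi(\Xi(X)).
\]
The first term vanishes by \eqref{H1}. For the second, substitute \eqref{H2} and use the pre-momentum identity $\xi_M = P^\sharp\mu^\xi$ together with $\beta(P^\sharp\alpha) = -\alpha(P^\sharp\beta)$:
\[
\mu^\xi\bigl(P^\sharp d(Q(X))\bigr) = -\,d(Q(X))(\xi_M) = -L_{\xi_M}(Q(X)).
\]
Hence $\mathbf{i}_{(\operatorname{id}-\gamma)X}\mu^\xi = L_{\xi_M}(Q(X))$, and averaging both sides yields \eqref{H4} via the identity $\langle L_{\xi_M}f\rangle^G = 0$, which follows from left-invariance of the Haar measure by the substitution $h\mapsto\exp(t\xi)g$ inside $\frac{d}{dt}\big|_{t=0}$.

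\textbf{Sufficiency \eqref{H4} $\Rightarrow$ existence.} Take $\tilde\gamma := \bar\gamma$ and choose $Q$ as in \eqref{MR2}. Condition \eqref{H1} for $\tilde\gamma=\bar\gamma$ is exactly the content of Lemma \ref{adiabaticprop} under \eqref{H4}. Condition \eqref{H2} is formula \eqref{MR1} of Theorem \ref{THM1} on $\gamma$-horizontal projectable vector fields, extended trivially to the vertical part of $X$ since both $\Xi^G$ and $Q$ annihilate $\mathbb{V}$. For \eqref{H3}, substitute \eqref{MR2}, exchange the order of integration, and use $\Phi_h^*\Phi_{\exp(ta(g))}^* = \Phi_{\exp(ta(g))h}^*$ with the change of variable $k = \exp(ta(g))h$ (bi-invariance of Haar on compact $G$) to obtain
\[
\langle Q(X)\rangle^G = -\int_G\int_0^1 \bigl\langle\mu^{a(g)}\bigl((\operatorname{id}-\gamma)X\bigr)\bigr\rangle^G\, dt\, dg = 0,
\]
the last equality by \eqref{H4}. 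In particular $\langle Q(X)\rangle^G \equiv 0$ is Casimir, so \eqref{H3} holds.

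\textbf{Identification and main obstacle.} Given \eqref{H1}--\eqref{H3}, every $\tilde\gamma$-horizontal projectable field decomposes as $\tilde Z = Z + \Xi(Z)$ with $Z\in\Gamma_{\mathrm{pr}}(\mathbb{H}^\gamma)$; by \eqref{H2}, $\Xi(Z)$ is Hamiltonian, so $\tilde Z$ is Poisson, whence $\tilde\gamma$ is a Poisson connection. From \eqref{H1} we obtain $\mu^\xi(\tilde Z)=0$, hence $L_{\tilde Z}\mu^\xi = \mathbf{i}_{\tilde Z}d\mu^\xi$, and the pre-momentum condition \eqref{C2} forces $P^\sharp\mathbf{i}_{\tilde Z}d\mu^\xi = 0$, i.e.\ $[\tilde Z,\xi_M]=0$; Lemma \ref{lemleaftang}(a) then gives $G$-invariance of $\tilde\gamma$, and combining this with \eqref{H3} (so that $\langle\Xi\rangle^G$ is Hamiltonian for a Casimir, hence zero) identifies $\tilde\gamma$ with $\bar\gamma$. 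The principal technical challenge is the triple-integral computation establishing $\langle Q(X)\rangle^G = 0$ in the sufficiency step, which requires careful bookkeeping of the parameterization $g\mapsto a(g)$ implicit in \eqref{MR2} and the commutation of the averaging operator with composition of pullbacks; parsing the truncated first conclusion of the theorem is a secondary interpretive hurdle.
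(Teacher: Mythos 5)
Your proposal is essentially correct, and on the central equivalence it follows the paper's own route: the same splitting $\mathbf{i}_{(\operatorname{id}-\gamma)X}\mu^\xi=\mathbf{i}_{(\operatorname{id}-\tilde\gamma)X}\mu^\xi-\mathbf{i}_{\Xi(X)}\mu^\xi$ for necessity, and the same choice $\tilde\gamma=\langle\gamma\rangle^G$ with $Q$ from \eqref{MR2}, Lemma \ref{adiabaticprop} for \eqref{H1} and Theorem \ref{THM1} for \eqref{H2}, for sufficiency. Two local differences are worth recording. First, in the necessity step you close with $\langle L_{\xi_M}f\rangle^G=0$ (left-invariance of Haar), whereas the paper's \eqref{EQP3} pushes the average through $\mathbf{i}_{P^\sharp\mu_a}d$ and then invokes \eqref{H3}; your version is cleaner and shows \eqref{H3} is not actually needed for that implication. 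Second, for \eqref{H3} in the sufficiency step you unfold \eqref{MR2} into a triple integral and use bi-invariance to get the stronger conclusion $\langle Q(X)\rangle^G=0$; the paper gets Casimir-ness in one line from $\langle\langle X\rangle^G-X\rangle^G=0$ and canonicity, $\langle P^\sharp d(Q(X))\rangle^G=P^\sharp d\langle Q(X)\rangle^G=0$, without touching \eqref{H4} at that point --- a shortcut you may want to adopt. Where you genuinely diverge is the first (truncated) conclusion: the paper proves uniqueness by comparing two solutions directly --- $(\tilde\gamma_1-\tilde\gamma_2)(X)=P^\sharp d\bigl((Q_2-Q_1)(X)\bigr)$, then \eqref{H1} gives $L_{\xi_M}\bigl((Q_2-Q_1)(X)\bigr)=0$, so this function equals its average, which is Casimir by \eqref{H3}, killing the difference --- while you show any solution is $G$-invariant via \eqref{C2} and Lemma \ref{lemleaftang}(a) and then identify it with $\bar\gamma$. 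Your route is sound but your last step is under-argued: $\langle\Xi(X)\rangle^G=0$ for each projectable $X$ does not immediately give $\langle\Xi\rangle^G=0$ as a tensor, since averaging does not commute with evaluation on a non-invariant vector field; you need to evaluate on the $G$-invariant generators $\langle Z\rangle^G$ of $\mathbb{H}^{\tilde\gamma}$ (Lemma \ref{lemleaftang}(b)) together with the vanishing of both $\Xi$ and $\langle\Xi\rangle^G$ on $\mathbb{V}$. The paper's pairwise comparison avoids this tensorial averaging entirely and is the more economical argument.
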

\begin{proof} (\textit{Uniqueness}). Suppose we have two connections
$\tilde{\gamma}_{1}$ and  $\tilde{\gamma}_{2}$ satisfying \eqref{H1}-\eqref{H3}. By condition \eqref{H2}, we have
\begin{equation}
(\gamma_1-\gamma_2)(X)=P^\sharp((Q_1-Q_2)(X)),
\end{equation}
for every $X\in\Gamma_{\mathrm{pr}}(TM)$. It follows form here and condition \eqref{H1} that the function $(Q_2-Q_1)(X)$ is $G$-invariant. Indeed,
$$\begin{array}{rcl}
L_{\xi_M}((Q_2-Q_1)(X))&=&-\mathrm{i}_{(\gamma_1-\gamma_2)(X)}\mu_a\vspace{2mm}\\
&=&-\mathrm{i}_{(\mathrm{id}-\gamma_2)(X)}\mu_a
+\mathrm{i}_{(\mathrm{id}-\gamma_1)(X)}\mu_a=0.
\end{array}$$
Thus, $(Q_2-Q_1)(X)=\langle(Q_2-Q_1)(X)\rangle^G=\langle Q_2(X)\rangle^G-\langle Q_1(X)\rangle^G$. So, by condition \eqref{H3}, $(Q_2-Q_1)(X)$ is a Casimir function which implies that  $(\gamma_1-\gamma_2)(X)=0$ for all $X\in\Gamma_{\operatorname*{pr}}(TM)$ and then  $\gamma_1=\gamma_2$.

(\textit{Existence}). First of all, for each pair of connection such that $\gamma = \Xi +\tilde{\gamma} $ we have the following identity
\begin{equation}\label{EQP2}
\langle\mathrm{i}_{(\mathrm{id}-\gamma)(X)}\mu_a \rangle^G
=\langle\mathrm{i}_{(\mathrm{id}-\tilde{\gamma})(X)}\mu_a\rangle^G
-\langle\mathrm{i}_{\Xi(X)} \mu_a\rangle^G.
\end{equation}
Now, assume that there existe a connection $\tilde{\gamma}$ satisfying conditions \eqref{H1}-\eqref{H3}. Using \eqref{H2} and \eqref{H3}, we get that 
\begin{equation}\label{EQP3}
\begin{array}{rcl}
\langle\mathrm{i}_{\Xi(X)}\mu_a\rangle^G
&=&\langle\mathrm{i}_{P^{\sharp}d(Q(X))} \mu_a \rangle^G 
=-\langle\mathrm{i}_{P^{\sharp}\mu_a}d(Q(X)) \rangle^G
\vspace{2mm}\\
&=&-\mathrm{i}_{P^{\sharp}\mu_a}d(\langle Q(X)\rangle^G)
=\mathrm{i}_{P^{\sharp}d(\langle Q(X)\rangle^G)}\mu_a=0,
\end{array}
\end{equation}
for all $X\in\Gamma_{\mathrm{pr}}(TM)$. This relation together with identity \eqref{EQP2} and condition  \eqref{H1} 
imply \eqref{H4}. 
Conversely, suppose $\gamma$ satisfies \eqref{H4} and take $\tilde{\gamma}=\langle\gamma \rangle^G$. By Lemma \ref{adiabaticprop} $\tilde{\gamma}$ satisifies   satisfies condition \eqref{H1}. Also, the condition \eqref{H2} holds because of
the $G$-action admits a pre-momentum map. Finally,  the condition \eqref{H3} follows from the following identities
 $$0=\langle\langle X\rangle^G-X \rangle^G =\langle\Xi(X)\rangle^G=\langle P^{\sharp}d(Q(X))\rangle^G=
P^{\sharp}d\left(\langle Q(X)\rangle^G\right).$$
\end{proof}

\begin{corollary}
If a Poisson connection $\gamma$ satisfies the adiabatic condition \eqref{adiacond} then the Hannay-Berry connection  $\langle\gamma\rangle^G$ is the unique connection satisfiying the conditions \eqref{H1}-\eqref{H3}.
\end{corollary}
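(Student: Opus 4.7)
The plan is to reduce the corollary directly to Theorem \ref{axionhann}. First, I would rewrite the adiabatic condition \eqref{adiacond} evaluated on an arbitrary projectable vector field $X\in\Gamma_{\mathrm{pr}}(TM)$:
\[
\langle(\mathrm{id}_{T^*M}-\gamma^*)\mu^\xi\rangle^G(X)
=\langle\mu^\xi((\mathrm{id}-\gamma)(X))\rangle^G
=\langle\mathbf{i}_{(\mathrm{id}-\gamma)(X)}\mu^\xi\rangle^G.
\]
Here I use that the averaging operator commutes with evaluation on $G$-invariant vector valued objects; projectability of $X$ is what makes this manipulation meaningful after pull-back. This identification shows that \eqref{adiacond} is literally the same as hypothesis \eqref{H4} of Theorem \ref{axionhann}.

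Next, I would appeal to the existence direction of Theorem \ref{axionhann}: given \eqref{H4}, the proof of that theorem explicitly constructs $\tilde{\gamma}=\langle\gamma\rangle^G$ as a connection satisfying \eqref{H1}, \eqref{H2}, and \eqref{H3}. Concretely, \eqref{H1} follows from Lemma \ref{adiabaticprop} (which expresses $(\mathrm{id}_{T^*M}-\bar{\gamma}^*)\mu^\xi$ as the average of $(\mathrm{id}_{T^*M}-\gamma^*)\mu^\xi$, which vanishes by the adiabatic hypothesis); \eqref{H2} is the content of Theorem \ref{THM1}, which applies because $G$ admits a pre-momentum map; and \eqref{H3} follows because the average of $\Xi^G(X)=P^\sharp dQ(X)$ is zero, forcing $P^\sharp d\langle Q(X)\rangle^G=0$, i.e.\ $\langle Q(X)\rangle^G\in\mathrm{Casim}(M,P)$.

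Finally, I would invoke the uniqueness part of Theorem \ref{axionhann}: any two connections obeying \eqref{H1}--\eqref{H3} must coincide, so $\langle\gamma\rangle^G$ is the only such connection. No new computation beyond the identification of \eqref{adiacond} with \eqref{H4} is required. The main obstacle, if any, is purely notational: one must be careful that the averaging $\langle\cdot\rangle^G$ acting on a 1-form commutes with evaluation on a projectable vector field (which it does because such vector fields are pushed into projectable vector fields by $\Phi_g$), so that the rewriting in the first step is legitimate. Beyond that the corollary is an immediate specialization of the theorem.
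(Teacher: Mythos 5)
Your argument is correct and is exactly the route the paper intends: the corollary is stated without proof as an immediate specialization of Theorem \ref{axionhann}, and your identification of the adiabatic condition \eqref{adiacond} with hypothesis \eqref{H4} (via $(\mathrm{id}_{T^*M}-\gamma^*)\mu^\xi(X)=\mathbf{i}_{(\mathrm{id}-\gamma)(X)}\mu^\xi$), followed by the existence and uniqueness halves of that theorem, is precisely that specialization. Your verification that $\langle\gamma\rangle^G$ satisfies \eqref{H1}--\eqref{H3} also coincides step by step with the existence part of the paper's proof of Theorem \ref{axionhann} (Lemma \ref{adiabaticprop} for \eqref{H1}, Theorem \ref{THM1} for \eqref{H2}, and $\langle\Xi^G(X)\rangle^G=0$ for \eqref{H3}).
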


Given a Poisson connection $\gamma$ on the foliated Poisson manifold $(M,\mathcal{F},P)$, one can ask how to fix a pre-momentum map $\mu$ in order to satisfy the adiabatic condition \eqref{adiacond}. In particular, we wonder if there are some cohomological obstructions to the existence of such a $\mu$ .By Proposition \ref{casimirinhor} and Lemma \ref{adiabaticprop}, it follows that $\langle (\mathrm{id}_{T*M}-\gamma^*)(\mu^\xi)\rangle \in \mathcal{C}^1$ for all $\xi\in \mathfrak{g}$ but is not a cocycle of $d^\gamma_{1,0}$ in general. But, when it does, we can formulate an adiabaticity criterion for the existence of a momentum map satisfying the adiabatic conditions in terms of the de Rham-Casimir complex

\begin{proposition}\label{adiabcrit}
Assume that 
\begin{equation}
d^\gamma_{1,0}\langle (\mathrm{id}_{T*M}-\gamma^*)(\mu^\xi)\rangle =0.\label{coclycondition}
\end{equation}
 Then, there exists a pre-momentum map satisfying the adiabatic condition \eqref{adiacond} relative to $\gamma$ if and only the cohomology class of $\langle (\mathrm{id}_{T*M}-\gamma^*)(\mu^\xi)\rangle$ in the de Rham-Casimir complex is trivial.
\end{proposition}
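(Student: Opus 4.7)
The plan is to prove the biconditional in two directions, using the common thread that any two pre-momentum maps $\mu^\xi$ and $\tilde\mu^\xi$ for the same action differ by a 1-form $\lambda^\xi := \mu^\xi - \tilde\mu^\xi$ satisfying $P^\sharp\lambda^\xi = 0$ and $\mathbf{i}_{P^\sharp\beta}d\lambda^\xi = 0$, under which the obstruction form $\alpha^\xi := \langle (\mathrm{id}_{T^*M}-\gamma^*)\mu^\xi\rangle$ transforms by $\alpha^\xi \mapsto \alpha^\xi - \langle (\mathrm{id}-\gamma^*)\lambda^\xi\rangle$.

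For the sufficiency direction, assume $[\alpha^\xi]=0$ in the de Rham--Casimir cohomology. I would choose a Casimir function $h^\xi$ depending linearly on $\xi$ (by picking primitives on a basis of $\mathfrak{g}$) such that $\bar{d}^\gamma h^\xi = \alpha^\xi$, and then define $\tilde\mu^\xi := \mu^\xi - dh^\xi$. Two things to verify: first, $\tilde\mu$ remains a pre-momentum map, since $h^\xi$ being Casimir gives $P^\sharp dh^\xi = -X_{h^\xi} = 0$ (so $P^\sharp \tilde\mu^\xi = \xi_M$) and $d\tilde\mu^\xi = d\mu^\xi$ preserves \eqref{C2}; second, the adiabatic condition follows from the chain $\langle(\mathrm{id}-\gamma^*)dh^\xi\rangle = \langle d^\gamma_{1,0} h^\xi\rangle = \langle \alpha^\xi\rangle = \alpha^\xi$, where the last equality uses that $\alpha^\xi$ is already $G$-invariant as an average, giving $\langle(\mathrm{id}-\gamma^*)\tilde\mu^\xi\rangle = \alpha^\xi - \alpha^\xi = 0$.

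For the necessity direction, assume an adiabatic pre-momentum map $\tilde\mu$ exists and put $\lambda^\xi := \mu^\xi - \tilde\mu^\xi$, so $\alpha^\xi = \langle(\mathrm{id}-\gamma^*)\lambda^\xi\rangle$. My first step would be to show that for every $Z \in \Gamma_{\mathrm{pr}}(\mathbb{H}^\gamma)$ the function $\lambda^\xi(Z)$ is a Casimir: for $Y = P^\sharp\beta$ tangent to a symplectic leaf, Cartan's formula combined with $\lambda^\xi(Y)=0$ (from $P^\sharp\lambda^\xi=0$), $d\lambda^\xi(Z,Y)=0$ (the kernel condition on $d\lambda^\xi$), and the identity $[Z,P^\sharp\beta]=P^\sharp(L_Z\beta)$ (valid because $Z$ is Poisson) yields $d(\lambda^\xi(Z))(Y) = -\lambda^\xi([Z,Y]) = 0$. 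Thus $\lambda^\xi(Z)\in\operatorname{Casim}(M,P)$ and $(\mathrm{id}-\gamma^*)\lambda^\xi \in \mathcal{C}^1$.

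The remaining and main obstacle is to manufacture a Casimir primitive $h^\xi$, depending linearly on $\xi$, with $\bar{d}^\gamma h^\xi = \alpha^\xi$ from the data of $\lambda^\xi$. The strategy I would pursue is local-to-global: in Poisson coordinates $(x,y)$ (where Casimirs depend only on the transverse coordinates $x$), the constraints $P^\sharp\lambda^\xi=0$ and $\mathbf{i}_{P^\sharp\beta}d\lambda^\xi=0$ force $\lambda^\xi$ to have only $dx$-components with $x$-dependent coefficients, i.e., $\lambda^\xi$ is a transverse 1-form with Casimir coefficients; the cocycle hypothesis $d^\gamma_{1,0}\alpha^\xi=0$ combined with a partition-of-unity gluing argument on the de Rham--Casimir complex should then promote local Casimir primitives to a global one. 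This delicate gluing step is where the technical work resides.
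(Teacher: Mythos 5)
Your sufficiency direction is essentially the paper's proof: choose a Casimir primitive $K^\xi$ with $\bar d^\gamma K^\xi=\langle(\mathrm{id}-\gamma^*)\mu^\xi\rangle$, set $\tilde\mu^\xi=\mu^\xi-dK^\xi$, check that subtracting $dK^\xi$ preserves \eqref{C1}--\eqref{C2} because $P^\sharp dK^\xi=0$, and verify adiabaticity. The paper verifies the last point through Lemma \ref{adiabaticprop}, computing $(\mathrm{id}-\bar\gamma^*)\tilde\mu^\xi(Z)=d^\gamma_{1,0}K^\xi(Z)-d^{\bar\gamma}_{1,0}K^\xi(Z)=\{Q(Z),K^\xi\}_P=0$; you instead average directly and use idempotence of the averaging operator. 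Both are fine and equivalent.

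The necessity direction is where your attempt breaks down. The paper reads the converse as: the adiabatic pre-momentum map \emph{is} $\mu$ (or is obtained from $\mu$ by subtracting $dK^\xi$ with $K^\xi$ Casimir), so $\langle(\mathrm{id}-\gamma^*)\mu^\xi\rangle$ is itself (cohomologous to) zero and the class is trivially trivial. You instead allow an arbitrary adiabatic pre-momentum map $\tilde\mu$ and try to show that $\langle(\mathrm{id}-\gamma^*)(\mu^\xi-\tilde\mu^\xi)\rangle$ is $\bar d^\gamma$-exact. Your preliminary step (that this difference lies in $\mathcal{C}^1$) is correct, but the "local-to-global gluing" you defer to cannot close the argument: a partition of unity destroys the Casimir property of the local primitives (a bump function times a Casimir is not Casimir), and more fundamentally a $\bar d^\gamma$-closed element of $\mathcal{C}^1$ need not be exact --- that failure is precisely what the de Rham--Casimir cohomology measures, and it is the reason the proposition is phrased in terms of a cohomology class at all. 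Indeed, the statement you are trying to prove is false in that generality: take $G=S^1$ acting trivially on $M=S^1$ with $P=0$ and the zero-dimensional foliation, so that every $1$-form $\mu^\xi$ with $P^\sharp\mu^\xi=0=\xi_M$ is a pre-momentum map; then $\tilde\mu^\xi=0$ is adiabatic, yet for $\mu^\xi=\xi\,d\theta$ the class of $\langle(\mathrm{id}-\gamma^*)\mu^\xi\rangle=\xi\,d\theta$ is nontrivial. So you should either restrict the quantifier as the paper implicitly does (adiabatic pre-momentum maps of the form $\mu^\xi-dK^\xi$ with $K^\xi$ Casimir), in which case the converse is the one-line observation that exactness is witnessed by $K^\xi$, or accept that the literal biconditional cannot be proved.
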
 
\begin{proof}
First, assume the class of $\langle (\mathrm{id}_{T*M}-\gamma^*)(\mu^\xi)\rangle $ is trivial, that is, for every $\xi \in \mathfrak{g} $ there exists a Casimir function $K_\xi\in C^\infty(M)$ such that
\begin{equation*}
\langle (\mathrm{id}_{T*M}-\gamma^*)(\mu^\xi)\rangle  = d^\gamma_{1,0} K^\xi.
\end{equation*}
Now, we define $\tilde{\mu}:\mathfrak{g}\rightarrow \Omega^1(M)$ by $\tilde{\mu}\xi= \mu^\xi -dK^\xi.$  It can be easily prove that $\tilde{\mu}$ is a pre-momentum for the $G$-action. Next,  $\tilde{\mu}$ satisfies the adiabatic condition. Indeed, for every $Z\in \Gamma_{\operatorname{pr}}(TM)$, we have
\begin{eqnarray*}
(\mathrm{id}-\bar{\gamma}^*)\tilde{\mu}\xi&=& (\mathrm{id}-\bar{\gamma}^*)\mu\xi(Z) - d^{\bar{\gamma}}_{1,0}K^\xi(Z),\\
&=& \langle \mathrm{id}-\gamma^*)\mu\xi\rangle (Z) - d^{\bar{\gamma}}_{1,0}K^\xi(Z),\\
&=& L_{(\gamma - \bar{\gamma})(Z)} K^\xi= \{ Q(Z), K^\xi\}_P=0.
\end{eqnarray*} 
Conversely, if a pre-momentum $\mu$ satisfies the adiabatic condition then the cohomology class of $\langle (\mathrm{id}_{T*M}-\gamma^*)(\mu^\xi)\rangle$ is trivial.
\end{proof}

Since $ \langle (\mathrm{id}_{T*M}-\gamma^*)(\mu^\xi)\rangle $ is a vertical form, the assumption in Proposition \ref{adiabcrit} means that $\langle (\mathrm{id}_{T*M}-\gamma^*)(\mu^\xi)\rangle $ is a cocycle of the operator $d^\gamma_{1,0}$.

\begin{corollary} If the pre-momentum map $\mu$ is locally Hamiltonian ($d\mu^\xi=0$), then the assumption of Proposition \ref{adiabcrit} always holds. 
\end{corollary}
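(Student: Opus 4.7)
The plan is to exploit the closedness of $\mu^\xi$ together with the structural identities already established in Sections 4 and 5, reducing the computation of $d^\gamma_{1,0}\langle(\mathrm{id}-\gamma^*)\mu^\xi\rangle$ to an expression in which every contribution can be read off via Proposition \ref{casimirinhor} and Theorem \ref{THM1}. Set $\alpha := \langle(\mathrm{id}_{T^*M}-\gamma^*)\mu^\xi\rangle$. The first move is to apply Lemma \ref{adiabaticprop} to rewrite $\alpha = (\mathrm{id}_{T^*M}-\bar{\gamma}^*)\mu^\xi = \mu^\xi - \bar{\gamma}^*\mu^\xi$; combining this with the hypothesis $d\mu^\xi=0$ yields the clean identity $d\alpha = -d(\bar{\gamma}^*\mu^\xi)$.

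Next, to verify the cocycle condition, it suffices to check $d^\gamma_{1,0}\alpha(Y_0,Y_1)=d\alpha(Y_0,Y_1)=0$ for every pair $Y_0,Y_1\in\Gamma_{\operatorname{pr}}(\mathbb{H}^\gamma)$. I would introduce the associated $G$-invariant $\bar{\gamma}$-horizontal lifts $\widetilde{Y}_i := \langle Y_i\rangle^G = Y_i + P^\sharp dQ(Y_i)$ provided by Corollary \ref{genhordist} and Lemma \ref{lemleaftang}(b). By Proposition \ref{casimirinhor}, the functions $\mu^\xi(\widetilde{Y}_i)$ are Casimirs, so two simplifications become available: on the one hand, the Hamiltonian correction $-P^\sharp dQ(Y_i)$ Poisson-commutes with Casimirs, which forces $Y_i\bigl(\mu^\xi(\widetilde{Y}_j)\bigr) = \widetilde{Y}_i\bigl(\mu^\xi(\widetilde{Y}_j)\bigr)$; on the other, because vertical fields are involutive and the $\widetilde{Y}_i$ are projectable, every bracket of $Y_i$'s differs from $[\widetilde{Y}_0,\widetilde{Y}_1]$ only by a vertical field, so $(\mathrm{id}-\bar{\gamma})[Y_0,Y_1]=(\mathrm{id}-\bar{\gamma})[\widetilde{Y}_0,\widetilde{Y}_1]=[\widetilde{Y}_0,\widetilde{Y}_1]-\operatorname{Curv}^{\bar{\gamma}}(\widetilde{Y}_0,\widetilde{Y}_1)$.

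Plugging these into Cartan's formula for $d\alpha(Y_0,Y_1)=Y_0(\alpha(Y_1))-Y_1(\alpha(Y_0))-\alpha([Y_0,Y_1])$ collapses the expression to the telling identity
\[
d^\gamma_{1,0}\alpha(Y_0,Y_1) = d\mu^\xi(\widetilde{Y}_0,\widetilde{Y}_1) + \mu^\xi\bigl(\operatorname{Curv}^{\bar{\gamma}}(\widetilde{Y}_0,\widetilde{Y}_1)\bigr).
\]
The first term vanishes by hypothesis $d\mu^\xi=0$, so the whole assertion reduces to the claim that $\mu^\xi$ annihilates $\operatorname{Curv}^{\bar{\gamma}}(\widetilde{Y}_0,\widetilde{Y}_1)$. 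The main obstacle, and the step I expect to be the most delicate, is precisely this last vanishing: the $\bar{\gamma}$-curvature evaluated on two $G$-invariant horizontal projectable fields is a $G$-invariant vertical Poisson vector field, and one must exploit the locally Hamiltonian nature of $\mu^\xi$ (writing $\mu^\xi = df^\xi$ locally, so that $[V,\xi_M]=0$ produces $V(f^\xi)\in\operatorname{Casim}(M,P)$ via $[V,X_{f^\xi}]=-X_{V(f^\xi)}$) together with the $G$-invariance to conclude the Casimir thus produced is in fact zero, showing $\alpha$ is a cocycle of $d^\gamma_{1,0}$ as required.
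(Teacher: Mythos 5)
Your reduction is correct and, in fact, sharper than anything the paper offers (the corollary is stated without proof): the identity $d^{\gamma}_{1,0}\alpha(Y_0,Y_1)=d\mu^\xi(\widetilde{Y}_0,\widetilde{Y}_1)+\mu^\xi\bigl(\operatorname{Curv}^{\bar{\gamma}}(\widetilde{Y}_0,\widetilde{Y}_1)\bigr)$ checks out, as do the two simplifications you use to get it. But the final step is a genuine gap, and it is exactly where the whole content of the statement sits. What your argument actually establishes is that $V(f^\xi)=\mu^\xi(\operatorname{Curv}^{\bar{\gamma}}(\widetilde{Y}_0,\widetilde{Y}_1))$ is a ($G$-invariant) Casimir function, via $[V,\xi_M]=0$ and $[V,X_{f^\xi}]=\pm X_{V(f^\xi)}$. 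A $G$-invariant Casimir need not vanish, and no mechanism in your sketch forces it to; the paper itself flags this precise phenomenon in the Remark after Theorem \ref{theoavercon}, where the $G$-invariance of $\operatorname{Curv}^{\langle\gamma\rangle}$ is said to yield only $L_{\xi_M}\bar{\sigma}\in\mathcal{C}^2$, not $L_{\xi_M}\bar{\sigma}=0$ --- and when $\gamma\in\operatorname{Con}_{\operatorname{H}}$ your residual term is exactly $\pm(L_{\xi_M}\bar{\sigma})(\widetilde{Y}_0,\widetilde{Y}_1)$.

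That the obstruction is real, and not an artifact of the method, can be seen on $M=\mathbb{R}^2\times T^2$ with $P$ the fiberwise symplectic structure of $dq\wedge dp$, $G=S^1$ acting by translation in $q$ (so $\mu^\xi=\pm dp$ is closed), and $\mathbb{H}^{\gamma}$ spanned by $\partial_{b_1}$ and $\partial_{b_2}+b_1\partial_p$: here $\gamma$ is Poisson and already $G$-invariant, $\operatorname{Curv}^{\bar{\gamma}}(\widetilde{Y}_0,\widetilde{Y}_1)=\partial_p$, and $\mu^\xi(\partial_p)=\pm1\neq0$, so $\alpha$ is not a $d^{\gamma}_{1,0}$-cocycle. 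The point is that $\partial_p$ is symplectic but not Hamiltonian on $T^2$, i.e.\ $\gamma\notin\operatorname{Con}_{\operatorname{H}}(M,\mathcal{F},P)$. So the corollary cannot follow from $d\mu^\xi=0$ alone: you must also invoke the standing hypothesis that $\gamma$ (hence $\bar{\gamma}$) has Hamiltonian curvature, write $\operatorname{Curv}^{\bar{\gamma}}(\widetilde{Y}_0,\widetilde{Y}_1)=-P^{\sharp}d\bar{\sigma}(\widetilde{Y}_0,\widetilde{Y}_1)$, and then supply an actual proof that $L_{\xi_M}\bar{\sigma}=0$ --- for instance via the $G$-invariance of $\bar{\sigma}$ obtained in the proof of Theorem \ref{teodiracinv} from the averaging of the leafwise presymplectic form. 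Without that input your argument stops one step short of the conclusion.
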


In particularly, in the case of a canonical $G$-action with momentum map $\mathbb{J}:\mathfrak{g}\rightarrow C^\infty(M)$, the 1-cocycle in \eqref{coclycondition} is describe as follows. Its cohomology class is trivial in the following situations: 
\begin{itemize}

\item[(a)] For every $\xi\in \mathfrak{g}$ there exists Casimir function  $K^\xi\in C^\infty(M)$ such that
\begin{equation*}
\langle d^{\gamma}_{1,0}  \mathbb{J}^\xi\rangle^G = d^{\bar{\gamma}}_{1,0} K^\xi.
\end{equation*}

\item[(b)] The momentum map is equivariant and the Lie group is semisimple.

\end{itemize}
\section{Applications}

Assume again that we start with a Poisson foliation $(M,\mathcal{F},P)$ equipped with an action $\Phi:G\times M\rightarrow M$ which admits a pre-momentum map $\mu$, where $G$ is a connected and compact Lie group. In other words, we assume that the $G$-action satisfies conditions (\ref{C1}), \eqref{C2}. Our point is to construct $G$-invariant Dirac structures on
$(M,\mathcal{F},P)$ by combining the averaging procedure for Poisson connections in 
$\operatorname{Con}_{\operatorname{H}}(M,\mathcal{F},P)$ with the so-called coupling method (see also \cite{Cu-90,DW-08,Va-06,VaVo-14}).

First, recall some facts from the theory of Dirac structures. A subbundle $D\subset\mathbb{T}M=TM\oplus T^{\ast}M$ is said to be a Dirac structure if $D$ is maximally isotropic with respect to the natural pairing
\[
\langle (X,\alpha),(Y,\beta)\rangle =\beta(X)+\alpha(Y)%
\]
and involutive with respect to the Courant bracket%
\[
\lbrack(X,\alpha),(Y,\beta)]=([X,Y],L_{X}\beta-L_{Y}\alpha+\frac{1}{2}%
d(\alpha(Y)-\beta(X)).
\]
Every Dirac structure $D$ induces a pre-symplectic (singular) foliation
$(\mathcal{S},\omega)$ on $M$, where%
\[
T\mathcal{S}=\operatorname{pr}_{TM}(D)
\]
($\operatorname*{pr}_{TM}:TM\oplus T^{\ast}M\rightarrow TM$ is a natural
projection onto the first factor) and $\omega$ is a (smooth) leafwise presymplectic form defined at
each point $m\in M$ by
\[
\omega_{m}(X,Y)=\alpha(Y)
\]
for $\alpha\in T_{m}^{\ast}M$ such that $(X,\alpha)\in D_{m}$. On the
contrary, each pre-symplectic foliation $(\mathcal{S},\omega)$ on $M$, induces
a Dirac structure%
\[
D_{m}:=\{(X,\alpha)\mid X\in T_{m}\mathcal{S},\text{ }\alpha\mid
_{T_{m}\mathcal{S}}=-\mathbf{i}_{X}\omega\}.
\]
Now, pick a $\gamma\in\operatorname{Con}_{\operatorname{H}}(M,\mathcal{F},P)$
and fix a 2-form $\sigma=\sigma^{\gamma}$ in (\ref{HAM1}). Then, 
one can introduce the following distribution
$D^{\gamma,\sigma}\subset TM\oplus T^{\ast}M$ given by
\begin{equation}
D^{\gamma,\sigma}:=\{(X+P^{\sharp}(\alpha),\alpha-\mathbf{i}_{X}\sigma)\mid
X\in\Gamma\mathbb{H}^{\gamma}, \alpha\in\Gamma(\mathbb{H}^{\gamma}%
)^{0}\}.\label{AD}%
\end{equation}
It is clear that $D^{\gamma,\sigma}$ is a regular distribution whose rank is
just equal to $\dim M$. By straightforward computation, one can show that $D^{\gamma,\sigma}$ is a Lagrangian distribution.

\begin{proposition}\label{diracadm}
For every admissible Poisson connection $\gamma\in\operatorname{Con}%
_{\operatorname{H}}(M,\mathcal{F},P)$, the associated distribution
$D^{\gamma,\sigma}$ in (\ref{AD}) is a Dirac structure on $M$.  
\end{proposition}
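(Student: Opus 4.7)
The excerpt already notes that $D^{\gamma,\sigma}$ has rank $\dim M$ and is Lagrangian (so maximally isotropic with respect to the natural pairing). The missing content of the proposition is involutivity with respect to the Courant bracket, and that is what my proof will focus on.

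The plan is to verify involutivity on a generating family, since the Courant bracket satisfies a Leibniz-type rule modulo sections of any Lagrangian subbundle. I will take as generators the two natural families of sections suggested by the formula (\ref{AD}):
\begin{equation*}
\mathcal{H}(Z):=(Z,-\mathbf{i}_Z\sigma),\qquad Z\in\Gamma_{\operatorname{pr}}(\mathbb{H}^\gamma),
\end{equation*}
\begin{equation*}
\mathcal{V}(\alpha):=(P^\sharp\alpha,\alpha),\qquad \alpha\in\Gamma((\mathbb{H}^\gamma)^0).
\end{equation*}
Then three types of Courant brackets must be checked.

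For the vertical--vertical bracket $[\mathcal{V}(\alpha),\mathcal{V}(\beta)]$, the first component reduces via Jacobi for $P$ to $P^\sharp[\alpha,\beta]_{P}$ (the Koszul bracket), and a short Cartan computation collapses the second component to $[\alpha,\beta]_P$, giving $\mathcal{V}([\alpha,\beta]_P)$. For the mixed bracket $[\mathcal{H}(Z),\mathcal{V}(\alpha)]$, the first component becomes $[Z,P^\sharp\alpha]=P^\sharp L_Z\alpha$ because $Z$ is Poisson (here we use that $\gamma$ is a Poisson connection); the second component computed via the Courant bracket formula reduces, after using $\mathbf{i}_{P^\sharp\alpha}\sigma=0$ (since $\sigma$ is horizontal and $P^\sharp\alpha$ is vertical), to $L_Z\alpha-\mathbf{i}_{[Z,P^\sharp\alpha]_{\mathrm{hor}}}\sigma$. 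Taken together, this output is $\mathcal{V}(L_Z\alpha)$ modulo a term absorbed by the horizontal projectable component.

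The main obstacle is the horizontal--horizontal bracket $[\mathcal{H}(Z_1),\mathcal{H}(Z_2)]$, where all three structural assumptions must interact. The first component is $[Z_1,Z_2]$, which decomposes as its horizontal projectable part $[Z_1,Z_2]_{\mathrm{hor}}\in\Gamma_{\operatorname{pr}}(\mathbb{H}^\gamma)$ plus the vertical part $\gamma([Z_1,Z_2])=\operatorname{Curv}^\gamma(Z_1,Z_2)$. By the Hamiltonian curvature hypothesis (\ref{HAM1}), the latter equals $-P^\sharp d\sigma(Z_1,Z_2)$, matching the ansatz $X+P^\sharp\alpha$ with $X=[Z_1,Z_2]_{\mathrm{hor}}$ and $\alpha=-d\sigma(Z_1,Z_2)$. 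The second component must then be shown to equal $\alpha-\mathbf{i}_X\sigma=-d\sigma(Z_1,Z_2)-\mathbf{i}_{[Z_1,Z_2]_{\mathrm{hor}}}\sigma$. Writing it out via the Courant bracket gives $-L_{Z_1}\mathbf{i}_{Z_2}\sigma+L_{Z_2}\mathbf{i}_{Z_1}\sigma-d\sigma(Z_1,Z_2)$, and the required matching is precisely the admissibility condition $d^\gamma_{1,0}\sigma=0$, which on horizontal projectable arguments unfolds into the Koszul-type identity relating $L_{Z_i}\mathbf{i}_{Z_j}\sigma$ with $\mathbf{i}_{[Z_i,Z_j]}\sigma$.

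At the end I will remark that it suffices to check the three brackets on generators, since the Leibniz rule for the Courant bracket introduces only terms of the form $\langle e_1,e_2\rangle df$ which vanish by isotropy of $D^{\gamma,\sigma}$. The hardest step is the horizontal--horizontal identity, where the three hypotheses on $\gamma$ (connection form with $\operatorname{Im}\gamma=\mathbb{V}$, Poisson property, and admissible Hamiltonian curvature) must combine exactly to produce the output in $D^{\gamma,\sigma}$; this is also where the admissibility assumption $d^\gamma_{1,0}\sigma=0$ becomes essential, which explains why the statement is restricted to admissible Poisson connections rather than to all of $\operatorname{Con}_{\operatorname{H}}(M,\mathcal{F},P)$.
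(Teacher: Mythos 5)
Your proposal follows essentially the same route as the paper's proof: both verify closure under the Courant bracket on the generating sections $(P^{\sharp}\alpha,\alpha)$ and $(Z,-\mathbf{i}_{Z}\sigma)$ of the two graphs, invoking the Poisson-connection property for the brackets involving $\alpha$'s, the Hamiltonian curvature condition (\ref{HAM1}) for the vertical part of $[Z_1,Z_2]$, and admissibility $d^{\gamma}_{1,0}\sigma=0$ to match the covector component in the horizontal--horizontal case. Your explicit remark that the Leibniz defect $\langle e_1,e_2\rangle\,df$ vanishes by isotropy, so that checking on generators suffices, is a point the paper leaves implicit, but the argument is the same.
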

\begin{proof}
We only need to prove that $D^{\gamma,\sigma}$ is closed under the Courant bracket. 
Taking into account that $$D^{\gamma,\sigma}=\mathrm{Graph}(P)\oplus 
\mathrm{Graph}(\sigma),$$ 
we fix the set (local) of generators of $D$ defined by the elements of the form
$$e_{\alpha}=(P^{\sharp}(\alpha),\alpha)\ \ \ \text{and}\ \ \ 
e_{X}=(X,-\mathbf{i}_X\sigma),$$
with $X\in\Gamma(\mathbb{H}^{\gamma})$ and 
$\alpha\in\Gamma((\mathbb{H}^{\gamma})^0)$. 
Since $\gamma$ is a Poisson connection we have 
$$[e_{\alpha},e_{\beta}]=\left(P^{\sharp}\left(L_{P^{\sharp}(\alpha)}\beta -\mathbf{i}_{P^{\sharp}(\beta)}d\alpha\right),L_{P^{\sharp}(\alpha)}\beta -\mathbf{i}_{P^{\sharp}(\beta)}d\alpha\right)\in D^{\gamma,\sigma},$$
and
$$[e_X,e_{\alpha}]=\left(P^{\sharp}(L_X\alpha),L_X\alpha+\mathbf{i}_{P^{\sharp}(\alpha)}d\mathbf{i}_X\sigma \right)\in D^{\gamma,\sigma}.$$
The admissibility of $\gamma$ implies that
$$[e_{X},e_Y]=\left([X,Y],-L_X\mathbf{i}_Y\sigma +\mathbf{i}_Yd\mathbf{i}_X\sigma \right)=\left([X,Y],-\mathbf{i}_{[X,Y]}\sigma\right)\in D^{\gamma,\sigma}.$$
Finally, the equations
$$\left\langle [e_X,e_{\alpha}], e_{\beta}\right\rangle = 0,\ \ \ \left\langle [e_X,e_{\alpha}], e_X\right\rangle = 0$$
hold because of $\gamma \in\operatorname{Con}%
_{\operatorname{H}}(M,\mathcal{F},P).$
\end{proof}

\begin{remark}
In fact, $D^{\gamma,\sigma}$ is  a coupling Dirac structure on the foliated
manifold $(M,\mathcal{F)}$ associated to the geometric data $(\gamma
,\sigma,P)$, \cite{Va-06,VaVo-14}.
\end{remark}
Recall that a distribution $D \subset TM\oplus T^*M$ is said to be $G$-invariant if 
\begin{equation}
(X,\alpha)\in\Gamma(D)\Longrightarrow(\Phi_{g}^{\ast}X,\Phi_{g}^{\ast}%
\alpha)\in\Gamma(D) \ \forall g\in G.\label{distinv}
\end{equation}
In particular, if a Dirac structure $D\subset TM\oplus T^*M$ is $G$-invariant as above, we will call the action a \textbf{Dirac action} of $D$

\begin{lemma}\label{Lemma1}
Let $\gamma\in\operatorname{Con}_{\operatorname{H}}(M,\mathcal{F},P)$ be an
arbitrary connection and $\bar{\gamma}=\langle\gamma\rangle^{G}$ its  $G$-average. Then,
the invariance of the distribution $D^{\bar{\gamma},\bar{\sigma}}$ under the
$G$-action is equivalent to the $G$-invariance of the 2-form $\bar{\sigma}$
in (\ref{MF1}),
\[
L_{\xi_{M}}\bar{\sigma}=0\ \forall a\in\mathfrak{g}.
\]
\end{lemma}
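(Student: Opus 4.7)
The plan is to analyze how $\Phi_{g}^{\ast}$ acts on an arbitrary generator $(X+P^{\sharp}\alpha,\alpha-\mathbf{i}_{X}\bar{\sigma})$ of $D^{\bar{\gamma},\bar{\sigma}}$, using three ingredients that come for free: (i) the averaged connection $\bar{\gamma}$ is $G$-invariant by Proposition~\ref{propinvconn}, so $\Phi_{g}^{\ast}$ preserves both the splitting $TM=\mathbb{H}^{\bar{\gamma}}\oplus\mathbb{V}$ and its dual $T^{\ast}M=\mathbb{V}^{0}\oplus(\mathbb{H}^{\bar{\gamma}})^{0}$; (ii) the $G$-action is canonical on $(M,P)$ as a consequence of the pre-momentum map assumptions, so $\Phi_{g}^{\ast}$ commutes with $P^{\sharp}$; and (iii) $\bar{\sigma}$ is a horizontal $2$-form and $\Phi_{g}^{\ast}$ preserves horizontality, so both $\bar{\sigma}$ and $\Phi_{g}^{\ast}\bar{\sigma}$ lie in $\Gamma(\wedge^{2}\mathbb{V}^{0})$.

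The reverse implication will be immediate: assuming $L_{\xi_{M}}\bar{\sigma}=0$ for all $\xi\in\mathfrak{g}$, connectedness of $G$ yields $\Phi_{g}^{\ast}\bar{\sigma}=\bar{\sigma}$, and a direct calculation then shows that $\Phi_{g}^{\ast}$ sends the generator above to $(X'+P^{\sharp}\alpha',\alpha'-\mathbf{i}_{X'}\bar{\sigma})$ with $X'=\Phi_{g}^{\ast}X\in\Gamma\mathbb{H}^{\bar{\gamma}}$ and $\alpha'=\Phi_{g}^{\ast}\alpha\in\Gamma(\mathbb{H}^{\bar{\gamma}})^{0}$, so $D^{\bar{\gamma},\bar{\sigma}}$ is $G$-invariant.

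For the forward direction, I would assume $D^{\bar{\gamma},\bar{\sigma}}$ is $G$-invariant. Since the parametrization $(X,\alpha)\mapsto(X+P^{\sharp}\alpha,\alpha-\mathbf{i}_{X}\bar{\sigma})$ of $D^{\bar{\gamma},\bar{\sigma}}$ is a bijection (injectivity follows from the horizontal/vertical splitting), the pullback of a generator must itself be the generator associated with a unique pair $(X',\alpha')$. Matching first components and using the direct-sum decomposition of $TM$ forces $X'=\Phi_{g}^{\ast}X$; matching second components then yields the constraint
\[
\alpha'-\Phi_{g}^{\ast}\alpha \;=\; \mathbf{i}_{\Phi_{g}^{\ast}X}\bigl(\bar{\sigma}-\Phi_{g}^{\ast}\bar{\sigma}\bigr).
\]
The left-hand side belongs to $(\mathbb{H}^{\bar{\gamma}})^{0}$, while the right-hand side, as the contraction of a horizontal $2$-form with a vector field, belongs to $\mathbb{V}^{0}$. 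The key step is to invoke the transversality $\mathbb{V}^{0}\cap(\mathbb{H}^{\bar{\gamma}})^{0}=0$ implied by the splitting (\ref{SP2}): each side must vanish independently. Letting $Y=\Phi_{g}^{\ast}X$ range over all of $\Gamma\mathbb{H}^{\bar{\gamma}}$, one concludes $\mathbf{i}_{Y}(\bar{\sigma}-\Phi_{g}^{\ast}\bar{\sigma})=0$ for every horizontal $Y$; combined with horizontality of $\bar{\sigma}-\Phi_{g}^{\ast}\bar{\sigma}$ this forces $\Phi_{g}^{\ast}\bar{\sigma}=\bar{\sigma}$, and differentiating at the identity of $G$ yields $L_{\xi_{M}}\bar{\sigma}=0$. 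The only non-routine point in the whole argument is precisely this transversality, which decouples the constraint on $\alpha'$ into its $\mathbb{V}^{0}$- and $(\mathbb{H}^{\bar{\gamma}})^{0}$-components; everything else is bookkeeping with the $G$-invariance of $\bar{\gamma}$ and $P$.
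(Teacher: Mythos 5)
Your argument is correct and follows essentially the same route as the paper: invariance of $\bar{\gamma}$ gives invariance of the splittings (\ref{SP1})--(\ref{SP2}), matching the $TM$-components of a pulled-back generator forces $\tilde{X}=\Phi_{g}^{\ast}X$, and matching the $T^{\ast}M$-components via the transversality $\mathbb{V}^{0}\cap(\mathbb{H}^{\bar{\gamma}})^{0}=0$ yields $\mathbf{i}_{\tilde{X}}\Phi_{g}^{\ast}\bar{\sigma}=\mathbf{i}_{\tilde{X}}\bar{\sigma}$ for all horizontal $\tilde{X}$, hence $\Phi_{g}^{\ast}\bar{\sigma}=\bar{\sigma}$. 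The only (harmless) difference is that you spell out the converse direction explicitly, which the paper leaves implicit.
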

\begin{proof}
The invariance property for the averaged connection $\bar{\gamma}$ implies
that the corresponding splittings (\ref{SP1}) and (\ref{SP2}) are also
invariant under the $G$-action. The $G$-invariance condition for
$D^{\bar{\gamma},\bar{\sigma}}$ means that for any sections $X\in
\Gamma\mathbb{H}^{\gamma},$ $\alpha\in\Gamma(\mathbb{H}^{\gamma})^{0}$ and
$g\in G$, we have
\begin{equation}
\Phi_{g}^{\ast}X+\Phi_{g}^{\ast}P^{\sharp}(\alpha)=\tilde{X}+P^{\sharp}%
(\tilde{\alpha}),\label{I1}%
\end{equation}%
\begin{equation}
\Phi_{g}^{\ast}\alpha-\mathbf{i}_{\Phi_{g}^{\ast}X}\Phi_{g}^{\ast}%
\sigma=\tilde{\alpha}-\mathbf{i}_{\tilde{X}}\sigma\label{I2}%
\end{equation}
for some $\tilde{X}\in\Gamma\mathbb{H}^{\bar{\gamma}}$ and $\tilde{\alpha}%
\in\Gamma(\mathbb{H}^{\bar{\gamma}})^{0}$. \ Taking into account that the
action preserves the vertical $\mathbb{V}$ and horizontal $\mathbb{H}%
^{\bar{\gamma}}$ distributions (co-distributions), \ from (\ref{I1}) we
conclude that $\Phi_{g}^{\ast}X\in\Gamma\mathbb{H}^{\bar{\gamma}},\Phi
_{g}^{\ast}P^{\sharp}(\alpha)\in\Gamma\mathbb{V}$ and hence $\tilde{X}%
=\Phi_{g}^{\ast}X$. \ Moreover, it follows from (\ref{I2}) that $\tilde
{\alpha}=\Phi_{g}^{\ast}\alpha$ and $\mathbf{i}_{\tilde{X}}\sigma
=\mathbf{i}_{\tilde{X}}\Phi_{g}^{\ast}\sigma$ for all $\tilde{X}\in
\Gamma\mathbb{H}^{\bar{\gamma}}$. This implies that $\Phi_{g}^{\ast}%
\sigma=\sigma$.
\end{proof}

Now, we formulated a generalized version of the averaging theorem for Dirac structures \cite{VaVo-14}.

\begin{theorem}\label{teodiracinv}
Let $\gamma\in\operatorname{Con}_{\operatorname{H}}(M,\mathcal{F},P)$ be an
admissible Poisson connection. Then,  the averaged Poisson connection
$\bar{\gamma}=\langle\gamma\rangle^{G}$is again admissible and induces a $G$-invariant
Dirac structure $D^{\bar{\gamma},\bar{\sigma}+C}$, where $C\in\mathcal{C}^{2}$
is an arbitrary $\bar{d}^{\gamma}$-cocycle,
\[
\bar{d}^{\gamma}C=0.
\]
Moreover, if the pre-momentum map satisfies the adiabatic condition, then the $G$-action is a Hamiltonian action for $D^{\bar{\gamma},\bar{\sigma}+C}$.
\end{theorem}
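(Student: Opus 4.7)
The plan is to dispatch the three assertions in turn, relying on Proposition~\ref{diracadm} for Dirac integrability, Lemma~\ref{Lemma1} for $G$-invariance, and Lemma~\ref{adiabaticprop} for the Hamiltonian character under the adiabatic condition. Admissibility of $\bar{\gamma}$ and the existence of a Hamiltonian 2-form $\bar{\sigma}$ with $d^{\bar{\gamma}}_{1,0}\bar{\sigma}=0$ are already granted by Theorem~\ref{theoavercon}, so that step is immediate. For Dirac integrability of $D^{\bar{\gamma},\bar{\sigma}+C}$ I would check the hypotheses of Proposition~\ref{diracadm} for the pair $(\bar{\gamma},\bar{\sigma}+C)$: adding a Casimir-valued horizontal 2-form leaves the Hamiltonian vector field of the curvature unchanged, so $\bar{\sigma}+C$ is still a Hamiltonian form of $\operatorname{Curv}^{\bar{\gamma}}$. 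For covariant closedness, the identity \eqref{EQA1}, $d^{\bar{\gamma}}_{1,0}=d^{\gamma}_{1,0}+\{Q\wedge\cdot\}_{P}$ on horizontal forms, collapses to $d^{\bar{\gamma}}_{1,0}C=d^{\gamma}_{1,0}C$ on $\mathcal{C}^{2}$ because $\{Q\wedge C\}_{P}=0$ whenever $C$ takes Casimir values; the cocycle hypothesis then gives $d^{\bar{\gamma}}_{1,0}C=\bar{d}^{\gamma}C=0$, and combined with $d^{\bar{\gamma}}_{1,0}\bar{\sigma}=0$ this yields $d^{\bar{\gamma}}_{1,0}(\bar{\sigma}+C)=0$.

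For the $G$-invariance claim, Lemma~\ref{Lemma1} reduces the problem to verifying $L_{\xi_{M}}(\bar{\sigma}+C)=0$. The piece $L_{\xi_{M}}C$ vanishes automatically: $\xi_{M}=P^{\sharp}\mu_{\xi}$ is vertical, $\mathbf{i}_{\xi_{M}}C=0$ by horizontality of $C$, and a Cartan-formula expansion reduces the remaining terms to $\xi_{M}(C(X,Y))$ on projectable $X,Y$, which vanishes because $C(X,Y)$ is a Casimir and $\xi_{M}$ is Hamiltonian-like. For the piece $\bar{\sigma}$, the Remark only yields $L_{\xi_{M}}\bar{\sigma}\in\mathcal{C}^{2}$, so one must first replace $\bar{\sigma}$ by its $G$-average $\langle\bar{\sigma}\rangle^{G}$. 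The difference $C_{0}:=\langle\bar{\sigma}\rangle^{G}-\bar{\sigma}$ lies in $\mathcal{C}^{2}$ (since the $G$-action preserves both horizontality and Casimirs, each translate $\Phi_{g}^{*}\bar{\sigma}-\bar{\sigma}$ belongs to $\mathcal{C}^{2}$), and $d^{\bar{\gamma}}_{1,0}$ commutes with $G$-averaging thanks to the $G$-invariance of $\bar{\gamma}$, so $d^{\bar{\gamma}}_{1,0}\langle\bar{\sigma}\rangle^{G}=\langle d^{\bar{\gamma}}_{1,0}\bar{\sigma}\rangle^{G}=0$. Thus $C_{0}$ is a $\bar{d}^{\gamma}$-cocycle, and the substitution $\bar{\sigma}\mapsto\langle\bar{\sigma}\rangle^{G}$, $C\mapsto C-C_{0}$, only relabels the family of admissible Hamiltonian representatives. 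After this relabeling $\bar{\sigma}+C$ is manifestly $G$-invariant, giving the required invariance of the Dirac structure.

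Finally, for the Hamiltonian character of the action under the adiabatic condition, the natural candidate ``momentum datum'' is the assignment $\xi\mapsto(\xi_{M},\mu_{\xi})$. Lemma~\ref{adiabaticprop} together with the adiabatic hypothesis yields $(\operatorname{id}-\bar{\gamma}^{*})\mu_{\xi}=0$, i.e.\ $\mu_{\xi}\in\Gamma((\mathbb{H}^{\bar{\gamma}})^{0})$. Setting $X=0$ and $\alpha=\mu_{\xi}$ in the explicit description \eqref{AD}, one reads off directly $(\xi_{M},\mu_{\xi})=(P^{\sharp}\mu_{\xi},\mu_{\xi})\in D^{\bar{\gamma},\bar{\sigma}+C}$, which is precisely the Hamiltonian condition for the $G$-action on the Dirac structure. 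The main obstacle I anticipate is the $G$-invariance step: the Remark after Theorem~\ref{theoavercon} warns that the curvature alone only forces $L_{\xi_{M}}\bar{\sigma}$ into $\mathcal{C}^{2}$, so the averaging-and-relabeling argument is essential; the rest of the proof is direct bookkeeping with the bigraded calculus and the coupling-Dirac formula \eqref{AD}.
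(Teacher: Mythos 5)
Your proof reaches the right conclusions, and two of its three parts (integrability of $D^{\bar{\gamma},\bar{\sigma}+C}$ via Proposition~\ref{diracadm}, and the adiabatic/Hamiltonian part via Lemma~\ref{adiabaticprop} and the explicit description \eqref{AD}) coincide with what the paper does --- indeed you are more careful than the paper in checking that $\bar{\sigma}+C$ is still a covariantly closed Hamiltonian form, using \eqref{EQA1} and $\{Q\wedge C\}_P=0$. Where you genuinely diverge is the $G$-invariance step. The paper passes to the presymplectic foliation of the coupling Dirac structure: it writes the characteristic data as $T\mathcal{S}=\mathbb{H}^{\bar{\gamma}}\oplus P^{\sharp}(T^{*}M)$ with $\bar{\omega}_{\mathcal S}=\bar{\sigma}\oplus\tau_{\mathcal S}$, shows $\bar{\omega}_{\mathcal S}=\omega_{\mathcal S}-dQ|_{\mathcal S}=\langle\omega_{\mathcal S}\rangle^{G}$ by invoking an averaging formula for presymplectic forms from the literature, and deduces $L_{\xi_M}\bar{\sigma}=0$ before applying Lemma~\ref{Lemma1}. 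You instead stay entirely inside the bigraded/Casimir calculus: you average $\bar{\sigma}$ itself, show the defect $C_0=\langle\bar{\sigma}\rangle^{G}-\bar{\sigma}$ is a $\bar{d}^{\gamma}$-cocycle in $\mathcal{C}^{2}$, and absorb it into the parameter $C$. This buys independence from the external presymplectic averaging result and makes transparent exactly which freedom (the $\mathcal{C}^{2}$-ambiguity \eqref{FR}) is being exploited; the paper's route is more geometric and identifies $\bar{\omega}_{\mathcal S}$ with a genuine average, which is informative in its own right. Note also that your relabeling is stronger than you state: since every element of $\mathcal{C}^{2}$ is $G$-invariant, $L_{\xi_M}\bar{\sigma}=L_{\xi_M}\langle\bar{\sigma}\rangle^{G}-L_{\xi_M}C_0=0$ directly, so $\bar{\sigma}$ itself is invariant and no relabeling is actually needed.

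One justification in that step is too thin as written. You assert that $\Phi_g^{*}\bar{\sigma}-\bar{\sigma}\in\mathcal{C}^{2}$ ``since the $G$-action preserves both horizontality and Casimirs''; preservation of $\mathcal{C}^{2}$ by $\Phi_g^{*}$ does not by itself place the difference of a form \emph{outside} $\mathcal{C}^{2}$ with its translate into $\mathcal{C}^{2}$. The correct argument is to integrate the infinitesimal statement of the Remark, $L_{\xi_M}\bar{\sigma}\in\mathcal{C}^{2}$: writing
\begin{equation*}
\Phi_{\exp(\xi)}^{*}\bar{\sigma}-\bar{\sigma}=\int_{0}^{1}\Phi_{\exp(t\xi)}^{*}\bigl(L_{\xi_M}\bar{\sigma}\bigr)\,dt
\end{equation*}
and using that $\Phi_h^{*}(\mathcal{C}^{2})\subset\mathcal{C}^{2}$ together with the surjectivity of (products of) exponentials on the connected group $G$, one gets $\Phi_g^{*}\bar{\sigma}-\bar{\sigma}\in\mathcal{C}^{2}$ for all $g$, and hence $C_0\in\mathcal{C}^{2}$ after averaging. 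With that sentence repaired, your argument is complete.
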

\begin{proof}
By Theorem \ref{theoavercon} and Proposition \ref{diracadm} the distribution $D^{\bar{\gamma},\bar{\sigma}+C}$ defines a Dirac structure. To prove the $G$-invariance of $D^{\bar{\gamma},\bar{\sigma}+C}$ let us consider presymplectic foliations $(\mathcal{S},\omega)$ and
$(\mathcal{S},\bar{\omega})$, associated to $D^{\gamma,\sigma}$ and
$D^{\bar{\gamma},\bar{\sigma}}$ respectively. 
The ccharacteristic distribution of $D^{\gamma,\sigma}$ is
$$TS=\mathbb{H}^{\gamma}\oplus P^{\sharp}(T^*M),$$
with presymplectic form $\omega_S=\sigma\oplus \tau_S$,
where $\tau$ is the leaf wise symplectic form of $P$.
On the other hand, the characteristic distribution of 
$D^{\bar{\gamma},\bar{\sigma}}$ is
$$TS=\mathbb{H}^{\bar{\gamma}}\oplus P^{\sharp}(T^*M),$$
with presymplectic form $\bar{\omega}_S=\bar{\sigma}\oplus\tau_S$.
A generating family of vector fields for $TS$ is 
$$\left\{\widetilde{X}=X+P^{\sharp}d(Q(X)) \ \ \text{and}
\ \ P^{\sharp}(d f)\ |\ X\in\Gamma(\mathbb{H}^{\gamma}),\ 
f\in C^{\infty}(M)\right\}.$$
Evaluating $\bar{\omega}_S$ on the generating elements, we conclude that
\begin{equation*}
\bar{\omega}_S=\omega_S-dQ|_S.
\end{equation*}
Since the $G$-action admits a pre-momentum map,
the average of $\omega_S$ can be written as $\langle \omega_S\rangle ^G=\omega_S-i^*_Sd Q$,
where $i_S\colon S\hookrightarrow M$ is the canonical injection, (see \cite{VaVo-14}). 
Hence, $\bar{\omega}_S=\left\langle \omega_S\right\rangle ^G$
and the $G$-invariance of $\bar{\sigma}$ follows from here. The $G$-invariance of $D^{\bar{\gamma},\bar{\sigma}+C}$ is a consequence of Lemma \ref{Lemma1}. 
\end{proof}

\begin{corollary} 
The Dirac structures $D^{\gamma,{\sigma}}$ and $D^{\bar{\gamma},\bar{\sigma}+C}$ are related by  gauge transformation defined by the horizontal 2-form $dQ+C$.
\end{corollary}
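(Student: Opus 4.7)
The plan is to translate the statement into the presymplectic-foliation picture and then read off the result from the computation already performed in the proof of Theorem \ref{teodiracinv}. Recall that a gauge transformation by a $2$-form $B$ on $M$ acts on a Dirac subbundle by $\tau_{B}(D) := \{(X,\alpha + \mathbf{i}_{X}B) : (X,\alpha)\in D\}$; equivalently, it keeps the characteristic distribution $T\mathcal{S}$ unchanged and modifies the leafwise presymplectic form by $\omega \mapsto \omega - i_{\mathcal{S}}^{*}B$.

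My first step is to observe that the characteristic distributions of $D^{\gamma,\sigma}$ and $D^{\bar{\gamma},\bar{\sigma}+C}$ coincide. From the proof of Theorem \ref{teodiracinv}, the characteristic distribution of $D^{\gamma,\sigma}$ is $T\mathcal{S} = \mathbb{H}^{\gamma}\oplus P^{\sharp}(T^{*}M)$, while that of $D^{\bar{\gamma},\bar{\sigma}+C}$ is $\mathbb{H}^{\bar{\gamma}}\oplus P^{\sharp}(T^{*}M)$. By Theorem \ref{THM1}, the connection difference satisfies $\Xi^{G}(X) = P^{\sharp}dQ(X) \in P^{\sharp}(T^{*}M)$ for every $X\in\Gamma(\mathbb{H}^{\gamma})$, and since $\mathbb{H}^{\bar{\gamma}} = (\operatorname{Id}+\Xi^{G})(\mathbb{H}^{\gamma})$, the correction terms are absorbed into the second summand, so the two direct sums agree.

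Next, I compare the presymplectic forms on this common $T\mathcal{S}$. The proof of Theorem \ref{teodiracinv} already established $\bar{\omega}_{\mathcal{S}} = \omega_{\mathcal{S}} - dQ|_{\mathcal{S}}$ for the presymplectic forms of $D^{\gamma,\sigma}$ and $D^{\bar{\gamma},\bar{\sigma}}$. Because $C\in\mathcal{C}^{2}$ is horizontal and Casimir-valued, it does not interact with the leafwise symplectic form $\tau_{\mathcal{S}}$, and replacing $\bar{\sigma}$ by $\bar{\sigma}+C$ shifts only the horizontal part of $\bar{\omega}_{\mathcal{S}}$ by $C|_{\mathcal{S}}$. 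Combining both modifications, the presymplectic form associated with $D^{\bar{\gamma},\bar{\sigma}+C}$ is obtained from $\omega_{\mathcal{S}}$ by subtracting the pullback to $\mathcal{S}$ of the horizontal $2$-form $dQ+C$, which is exactly the effect of $\tau_{dQ+C}$.

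The Dirac-structure/presymplectic-foliation correspondence then yields $\tau_{dQ+C}(D^{\gamma,\sigma}) = D^{\bar{\gamma},\bar{\sigma}+C}$, giving the claim. The main technical point to watch is the sign bookkeeping in translating between the Hamiltonian curvature form $\sigma^{\gamma}$ (as defined by (\ref{HAM1})) and the leafwise presymplectic form $\omega_{\mathcal{S}} = \sigma\oplus\tau_{\mathcal{S}}$, together with the consistent handling of the direct-sum splitting when passing from $\mathbb{H}^{\gamma}$ to $\mathbb{H}^{\bar{\gamma}}$ via $\Xi^{G}$; once these conventions are fixed, the verification on the generators $e_{\alpha}$ and $e_{X}$ used in Proposition \ref{diracadm} is routine.
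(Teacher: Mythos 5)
Your proposal is correct and follows exactly the route the paper intends: the corollary is stated without its own proof, and the natural argument is precisely your reduction to the presymplectic-foliation picture, reusing the identities $T\mathcal{S}=\mathbb{H}^{\gamma}\oplus P^{\sharp}(T^{*}M)=\mathbb{H}^{\bar{\gamma}}\oplus P^{\sharp}(T^{*}M)$ and $\bar{\omega}_{\mathcal{S}}=\omega_{\mathcal{S}}-dQ|_{\mathcal{S}}$ already established in the proof of Theorem \ref{teodiracinv}. The only caveat is a sign wobble (with your stated conventions the $dQ$-shift and the $C$-shift enter with opposite signs, giving $dQ-C$ rather than $dQ+C$), but since $C$ ranges over all cocycles of $\mathcal{C}^{2}$ and the paper's own sign conventions for the Dirac/presymplectic correspondence are not internally consistent, this is immaterial.
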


\begin{corollary}\label{hamcondition}
If the pre-momentum map satisfies the adiabatic condition \eqref{adiacond}, then the infinitesimal generators of the $G$-action are local generators for $D^{\bar{\gamma},\bar{\sigma}+C}$, with $C\in\mathcal{C}^{2}$, that is
\begin{equation}
(\xi_M, \mu^\xi) \in \Gamma(D^{\bar{\gamma},\bar{\sigma}+C}) \ \ \ \forall \xi\in\mathfrak{g}.\label{hamcond}
\end{equation}
\end{corollary}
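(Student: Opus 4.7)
The plan is to exhibit $(\xi_M, \mu^\xi)$ directly as one of the defining generators of $D^{\bar{\gamma},\bar{\sigma}+C}$ from formula \eqref{AD}, with the parameters chosen using the two ingredients already available: the pre-momentum relation and the adiabatic condition. Since Theorem \ref{teodiracinv} has already established that $D^{\bar{\gamma},\bar{\sigma}+C}$ is a well-defined $G$-invariant Dirac structure, all that remains is to make the correct substitution.

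First, by the pre-momentum condition \eqref{C1}, one has $\xi_M = P^\sharp\mu^\xi$. Second, by the adiabatic assumption \eqref{adiacond} combined with Lemma \ref{adiabaticprop}, the 1-form $\mu^\xi$ satisfies
\[
(\operatorname{id}_{T^{\ast}M} - \bar{\gamma}^{\ast})\mu^\xi = 0,
\]
so that $\mu^\xi \in \Gamma((\mathbb{H}^{\bar{\gamma}})^0)$. Hence $\mu^\xi$ is of the correct type to be used as the parameter ``$\alpha$'' in the description of $D^{\bar{\gamma},\bar{\sigma}+C}$ given by \eqref{AD}.

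With this in hand, I would choose $X := 0 \in \Gamma(\mathbb{H}^{\bar{\gamma}})$ and $\alpha := \mu^\xi \in \Gamma((\mathbb{H}^{\bar{\gamma}})^0)$ and substitute into \eqref{AD} (relative to $\bar{\gamma}$ and $\bar{\sigma}+C$), obtaining
\[
\bigl(X + P^\sharp\alpha,\ \alpha - \mathbf{i}_X(\bar{\sigma}+C)\bigr) = \bigl(P^\sharp\mu^\xi,\ \mu^\xi\bigr) = (\xi_M,\ \mu^\xi),
\]
which is exactly the membership claim \eqref{hamcond}. No serious obstacle is anticipated here: the adiabatic condition was tailored precisely so that the components of the pre-momentum map land in $(\mathbb{H}^{\bar{\gamma}})^0$, which is what makes the trivial choice $X = 0$ legitimate. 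The only detail worth double-checking is that $C \in \mathcal{C}^2 \subset \Gamma(\wedge^2 \mathbb{V}^0)$, so $C$ is horizontal and does not affect the contraction $\mathbf{i}_X(\bar{\sigma}+C)$ for $X=0$, confirming the computation above.
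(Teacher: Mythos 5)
Your proof is correct and is essentially the paper's own argument: both deduce from the adiabatic condition via Lemma \ref{adiabaticprop} that $\mu^\xi\in\Gamma((\mathbb{H}^{\bar{\gamma}})^0)$ and then read off $(\xi_M,\mu^\xi)=(P^\sharp\mu^\xi,\mu^\xi)$ as an element of $D^{\bar{\gamma},\bar{\sigma}+C}$ from \eqref{AD}. You merely make explicit the choice $X=0$, $\alpha=\mu^\xi$ that the paper leaves implicit.
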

\begin{proof}
By Lemma \ref{adiabaticprop} we have $\mu^\xi\in \Gamma(\mathbb{H}^{\bar{\gamma}})^0 $ for each $\xi\in\mathfrak{g}$. Hence, $(\xi_M, \mu^\xi) \in \Gamma(D^{\bar{\gamma},\bar{\sigma}+C})$  for all $\xi\in\mathfrak{g}, $
\end{proof}

In the case when the pre-momentum map is actually a momentum map, i.e. $\mu^\xi =d \mathbb{J}^\xi $ for some $\mathbb{J}^\xi\in C^\infty(M)$, the action is called Hamiltonian, \cite{BRAHIC2014901}. Indeed, if the Dirac structure is the graph of a Poisson tensor, then the action is Hamiltonian in the usual sense (the infinitesimal generators are Hamiltonians). By Corollary \ref{hamcondition}, the adiabatic condition \eqref{adiacond} implies that the $G$-action is  Hamiltonian on $D^{\bar{\gamma},\bar{\sigma}+C}$.




\section*{Acknowledgement}
The authors are very grateful to Eduardo Velasco-Barreras for fruitful discussions. The research was partially supported by CONACYT under the grants CB2013 no. 219631 and CB2015 no. 258302. I. Hasse thanks the supporting from CONACYT CB2015 no. 258302 as a postdoctoral fellow where some of this work was done.


\nocite{Va-06}
\nocite{WusLieGpSqExp}
\nocite{Vo-05}
\nocite{DW-08}
\nocite{Wa-08}
\nocite{CuWe-86}
\nocite{Cu-90}
\nocite{JRS-11}
\nocite{SeWe}

\bibliographystyle{acm}

\bibliography{bibliography.bib}

\begin{thebibliography}{10}

\bibitem{MYu-17}
{\sc Avenda\~{n}o Camacho, M., and Vorobiev, Y.}
\newblock Deformations of {P}oisson structures on fibered manifolds and
  adiabatic slow-fast systems.
\newblock {\em Int. J. Geom. Methods Mod. Phys. 14}, 6 (2017), 1750086, 15.

\bibitem{BrF-08}
{\sc Brahic, O., and Fernandes, R.~L.}
\newblock Poisson fibrations and fibered symplectic groupoids.
\newblock In {\em Poisson geometry in mathematics and physics}, vol.~450 of
  {\em Contemp. Math.} Amer. Math. Soc., Providence, RI, 2008, pp.~41--59.

\bibitem{BRAHIC2014901}
{\sc Brahic, O., and Fernandes, R.~L.}
\newblock Integrability and reduction of hamiltonian actions on dirac
  manifolds.
\newblock {\em Indagationes Mathematicae 25}, 5 (2014), 901 -- 925.
\newblock Poisson 2012: Poisson Geometry in Mathematics and Physics.

\bibitem{CuWe-86}
{\sc Courant, T., and Weinstein, A.}
\newblock Beyond {P}oisson structures.
\newblock In {\em Action hamiltoniennes de groupes. {T}roisi\`eme
  th\'{e}or\`eme de {L}ie ({L}yon, 1986)}, vol.~27 of {\em Travaux en Cours}.
  Hermann, Paris, 1988, pp.~39--49.

\bibitem{Cu-90}
{\sc Courant, T.~J.}
\newblock Dirac manifolds.
\newblock {\em Trans. Amer. Math. Soc. 319}, 2 (1990), 631--661.

\bibitem{DW-08}
{\sc Dufour, J.-P., and Wade, A.}
\newblock On the local structure of {D}irac manifolds.
\newblock {\em Compos. Math. 144}, 3 (2008), 774--786.

\bibitem{Ginz-1}
{\sc Ginzburg, V.~L.}
\newblock Momentum mappings and {P}oisson cohomology.
\newblock {\em Internat. J. Math. 7}, 3 (1996), 329--358.

\bibitem{JRS-11}
{\sc Jotz, M., Ratiu, T.~S., and \'{S}niatycki, J.}
\newblock Singular reduction of {D}irac structures.
\newblock {\em Trans. Amer. Math. Soc. 363}, 6 (2011), 2967--3013.

\bibitem{Kolar}
{\sc Kolar, I., Michor, P., and Slovak, J.}
\newblock {\em Natural Operations in Differential Geometry}.
\newblock Electronic library of mathematics. Springer, 1993.

\bibitem{MaMoRa-90}
{\sc Marsden, J., Montgomery, R., and Ratiu, T.}
\newblock Reduction, symmetry, and phases in mechanics.
\newblock {\em Mem. Amer. Math. Soc. 88}, 436 (1990), iv+110.

\bibitem{Mn-88}
{\sc Montgomery, R.}
\newblock The connection whose holonomy is the classical adiabatic angles of
  hannay and berry and its generalization to the non-integrable case.
\newblock {\em Communications in Mathematical Physics 120}, 2 (Jun 1988),
  269--294.

\bibitem{AEYu-17}
{\sc Pedroza, A., Velasco-Barreras, E., and Vorobiev, Y.}
\newblock Unimodularity criteria for {P}oisson structures on foliated
  manifolds.
\newblock {\em Lett. Math. Phys. 108}, 3 (2018), 861--882.

\bibitem{VaisLecGeoPoisMfds}
{\sc Vaisman, I.}
\newblock {\em Lectures on the geometry of {P}oisson manifolds}, vol.~118 of
  {\em Progress in Mathematics}.
\newblock Birkh\"{a}user Verlag, Basel, 1994.

\bibitem{VaisCoupPoisJAcStr}
{\sc Vaisman, I.}
\newblock Coupling {P}oisson and {J}acobi structures on foliated manifolds.
\newblock {\em Int. J. Geom. Methods Mod. Phys. 1}, 5 (2004), 607--637.

\bibitem{Va-06}
{\sc Vaisman, I.}
\newblock Foliation-coupling {D}irac structures.
\newblock {\em J. Geom. Phys. 56}, 6 (2006), 917--938.

\bibitem{VaVo-14}
{\sc Vallejo, J.~A., and Vorobiev, Y.}
\newblock Invariant {P}oisson realizations and the averaging of {D}irac
  structures.
\newblock {\em SIGMA Symmetry Integrability Geom. Methods Appl. 10\/} (2014),
  Paper 096, 20.

\bibitem{Vo-01}
{\sc Vorobjev, Y.}
\newblock Coupling tensors and {P}oisson geometry near a single symplectic
  leaf.
\newblock In {\em Lie algebroids and related topics in differential geometry
  ({W}arsaw, 2000)}, vol.~54 of {\em Banach Center Publ.} Polish Acad. Sci.
  Inst. Math., Warsaw, 2001, pp.~249--274.

\bibitem{Vo-05}
{\sc Vorobjev, Y.}
\newblock Poisson equivalence over a symplectic leaf.
\newblock In {\em Quantum algebras and {P}oisson geometry in mathematical
  physics}, vol.~216 of {\em Amer. Math. Soc. Transl. Ser. 2}. Amer. Math.
  Soc., Providence, RI, 2005, pp.~241--277.

\bibitem{SeWe}
{\sc \v{S}evera, P., and Weinstein, A.}
\newblock Poisson geometry with a 3-form background.
\newblock {\em Progr. Theoret. Phys. Suppl.}, 144 (2001), 145--154.
\newblock Noncommutative geometry and string theory (Yokohama, 2001).

\bibitem{Wa-08}
{\sc Wade, A.}
\newblock Poisson fiber bundles and coupling {D}irac structures.
\newblock {\em Ann. Global Anal. Geom. 33}, 3 (2008), 207--217.

\bibitem{WusLieGpSqExp}
{\sc W\"{u}stner, M.}
\newblock A connected {L}ie group equals the square of the exponential image.
\newblock {\em J. Lie Theory 13}, 1 (2003), 307--309.

\end{thebibliography}
  


%

\end{document}